\theoremstyle{plain}
\newtheorem{thm}{\textbf{Theorem}}[section]
\newtheorem{lemma}[thm]{\textbf{Lemma}}
\newtheorem{prop}[thm]{\textbf{Proposition}}
\theoremstyle{remark}
\newtheorem{defn}[thm]{\textbf{Definition}}
\newtheorem*{ex}{\textbf{Example}}
\newtheorem*{rmk}{\textbf{Remark}}
\numberwithin{equation}{section}
\newcommand{\norm}[1]{\left|\!\left|{#1}\right|\!\right|}
\newcommand{\const}{\mathrm{const}}
\newcommand\dist{\mathrm{dist}}
\newcommand{\ges}{\gtrsim}
\newcommand{\les}{\lesssim} 
\newcommand\Id{\mathrm{Id}}
\newcommand\M{\mathbb{M}}
\newcommand\N{\mathbb{N}}
\newcommand\ol{\overline}
\newcommand{\Op}{\mathrm{Op}}
\newcommand\R{\mathbb{R}}
\newcommand\s{\mathbb{S}}
\newcommand\SDO{\text{$\Psi$DO}}
\newcommand\SH{\mathbb{SH}}
\newcommand\supp{\mathrm{supp}\,}
\newcommand\WF{\mathrm{WF}}
\newcommand\ve{\varepsilon}
\newcommand\vp{\varphi}
\title[$L^p$ bilinear quasimode estimates]{$L^p$ bilinear quasimode estimates}
\author{Zihua Guo}
\address{School of Mathematical Sciences, Monash University, Clayton, VIC 3800, Australia}
\email{Zihua.Guo@monash.edu}
\author{Xiaolong Han}
\address{Department of Mathematics, California State University Northridge, California 91330, USA}
\email{xiaolong.han@csun.edu}
\author{Melissa Tacy}
\address{Department of Mathematics and Statistics, University of Otago, Otago 9054, New Zealand}
\email{mtacy@maths.otago.ac.nz}
\subjclass[2010]{35P15, 58J40, 35B30, 33C55}
\keywords{Laplacian, bilinear estimates, eigenfunctions, quasimodes}
\begin{document}
\maketitle

\begin{abstract}
In this paper, we investigate the $L^p$ bilinear quasimode estimates on compact Riemannian manifolds. We obtain results in the full range $p\ge2$ on all $n$-dimensional manifolds with $n\ge2$. This in particular implies the $L^p$ bilinear eigenfunction estimates. We further show that all of these estimates are sharp by constructing various quasimodes and eigenfunctions that saturate our estimates.
\end{abstract}

\section{Introduction}
Let $(\M,g)$ be a smooth and compact Riemannian manifold without boundary. We denote $\Delta=\Delta_g$ the Laplace-Beltrami operator on $\M$. An eigenfunction $u$ of $-\Delta$ satisfies $-\Delta u=\lambda^2u$ with $\lambda$ its eigenfrequency. In 1988 Sogge \cite{So1, So2} proved that
\begin{equation}\label{eq:Sogge}
\|u\|_{L^p}\le C\lambda^{\delta(n,p)}\|u\|_{L^2},\quad\text{where }\delta(n,p)=
\begin{cases}
\frac{n-1}{4}-\frac{n-1}{2p} & \text{for }2\le p\le\frac{2(n+1)}{n-1},\\
\frac{n-1}{2}-\frac{n}{p} & \text{for }\frac{2(n+1)}{n-1}\le p\le\infty.
\end{cases}
\end{equation}
Here, $C$ is independent of $\lambda$. In 2007 Koch, Tataru, and Zworski \cite{KTZ} extended this result to quasimodes, i.e. approximate eigenfunctions in the sense that
$$\|(-\Delta-\lambda^2)u\|_{L^2}\le C\lambda\|u\|_{L^2}.$$
In fact, their result holds for Laplace-like semiclassical pseudodifferential operators. 

In this paper, we investigate  bilinear eigenfunction estimates. That is, for two eigenfunctions $u$ and $v$, we estimate $\|uv\|_{L^p}$  in terms of their eigenfrequencies. One can of course use H\"older's inequality and Sogge's $L^p$ \textit{linear} eigenfunction estimates \eqref{eq:Sogge} to prove a upper bound of $\|uv\|_{L^p}$. For example, on a Riemannian surface (i.e. two dimensional Riemannian manifold), let $u$ and $v$ be two $L^2$-normalized eigenfunctions with eigenvalues $\lambda^2\le\mu^2$. Then
$$\|uv\|_{L^2}\le\|u\|_{L^4}\|\|v\|_{L^4}\lesssim\lambda^\frac18\mu^\frac18\le\mu^\frac14,$$
or with a different pair of H\"older indices (among other possible choices)
$$\|uv\|_{L^2}\le\|u\|_{L^\infty}\|v\|_{L^2}\lesssim\lambda^\frac12.$$
The second bound $\lambda^\frac12$ does not depend on the higher frequency $\mu$, but is not necessarily better than the first bound $\mu^\frac14$ (given, say, $\lambda\approx\mu$). However in 2005, Burq, G\'erard, and Tzvetkov \cite{BGT4} proved that
\begin{equation}\label{eq:BGT}
\|uv\|_{L^2}\lesssim\lambda^\frac14.
\end{equation}
This estimate is clearly better than both the two previous bounds. Moreover, they showed that $\lambda^\frac14$ bound is \textit{sharp} on $\s^2$ (see more discussion on the sharpness of bilinear eigenfunction estimates in Section \ref{sec:sharpness}).

This improvement is crucial in Burq, G\'erard, and Tzvetkov's investigation of nonlinear dispersive equations on manifolds \cite{BGT4, BGT5}. They used \eqref{eq:BGT} to study the well-posedness of Cauchy problems involving nonlinear Sch\"odinger equations on compact Riemannian surfaces. In particular, they obtained the \textit{critical} well-posedness regularity for cubic Sch\"odinger equations on $\s^2$.

In fact, Burq, G\'erard, and Tzvetkov \cite{BGT4} proved \eqref{eq:BGT} for spectral clusters. Subsequently, they \cite{BGT2, BGT5} generalised \eqref{eq:BGT} to higher dimensions as follows.

\begin{thm}[$L^2$ bilinear spectral cluster estimates]\label{thm:BGT}
Let $(\M,g)$ be an $n$-dimensional compact manifold and $\chi\in C^\infty_0(\R)$. Write $\chi_\lambda=\chi\left(\sqrt{-\Delta}-\lambda\right)$. Then for all $1\le\lambda\le\mu$ and $\|f\|_{L^2}=\|g\|_{L^2}=1$, we have
$$\left\|\chi_\lambda(f)\chi_\mu(g)\right\|_{L^2}\lesssim
\begin{cases}
\lambda^\frac14 & \text{if }n=2;\\
\lambda^\frac12|\log\lambda|^\frac12 & \text{if }n=3;\\
\lambda^\frac{n-2}{2} & \text{if }n\ge4.
\end{cases}$$
Moreover, all these estimates are sharp (modulo the $\log$ loss in dimension three).
\end{thm}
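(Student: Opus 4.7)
The plan is to use a Fourier integral operator representation of the spectral clusters $\chi_\lambda$ and $\chi_\mu$, and then reduce the bilinear $L^2$ estimate to a stationary phase analysis of a four-fold oscillatory integral via a bilinear $TT^*$ argument. After localizing by a partition of unity on $\M$ and working in a fixed coordinate chart, I would use the half-wave identity
$$\chi_\lambda = \frac{1}{2\pi}\int_{\R}\hat\chi(t)\,e^{it\lambda}\,e^{-it\sqrt{-\Delta}}\,dt$$
together with the Lax parametrix for $e^{-it\sqrt{-\Delta}}$ to write $\chi_\lambda f$ as a Fourier integral operator applied to $f$, with phase of size $\lambda$ and amplitude supported in $\{|\xi|\approx\lambda\}$; similarly for $\chi_\mu g$. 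A further angular decomposition allows me to assume that the symbols of $\chi_\lambda f$ and $\chi_\mu g$ are supported in small conic neighborhoods of fixed unit covectors.

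Expanding $\|\chi_\lambda f\cdot\chi_\mu g\|_{L^2}^2$ produces a four-fold oscillatory integral in variables $(\xi_1,\xi_2,\eta_1,\eta_2)$ on the two spheres, whose integrand has an $x$-dependent phase of the form
$$\lambda\bigl(\varphi(x,\xi_1)-\varphi(x,\xi_2)\bigr)+\mu\bigl(\psi(x,\eta_1)-\psi(x,\eta_2)\bigr).$$
Stationary phase in $x$, under a transversality assumption on the pair of cotangent cones, yields a gain of $\lambda^{-n/2}$, and the remaining integral is controlled by the measure of the critical set on the product of spheres. A dimension count then gives the exponents $\lambda^{1/4}$ for $n=2$ and $\lambda^{(n-2)/2}$ for $n\ge4$, while in dimension $n=3$ the count is critical and produces a logarithmic divergence responsible for the $|\log\lambda|^{1/2}$ loss.

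The principal obstacle is the breakdown of transversality when the directions of $\xi$ and $\eta$ are nearly parallel, in which case the two cotangent cones intersect tangentially and the Hessian in the $x$-stationary phase degenerates. To handle this I would perform a Whitney-type angular decomposition of the product of the two spherical shells at the natural transversality scale $\sqrt{\lambda/\mu}$: on transversal pieces stationary phase applies with the full gain, while on tangential pieces one reduces to a one-dimensional problem controlled by a Sobolev-type bound along the coincident direction. The critical dimension $n=3$ is precisely where the transversal and tangential contributions balance on each dyadic angular scale, so summing over the $\sim\log\lambda$ scales produces the $|\log\lambda|^{1/2}$ factor.

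Sharpness follows by taking $\M=\s^n$ and choosing $f$ and $g$ as Gaussian-beam-type quasimodes (sums of highest weight spherical harmonics) concentrated along a common closed geodesic within an $O(\lambda^{-1/2})$ tube: such configurations saturate each of the bounds up to constants and, in dimension three, the logarithmic loss is unavoidable because the tangential contribution produces a genuine $|\log\lambda|^{1/2}$ in the dual computation. The detailed saturation examples, and the corresponding constructions for general quasimodes rather than just spectral clusters, will occupy a later section.
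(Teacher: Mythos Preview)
Your outline is a plausible route to the estimate---essentially the original Burq--G\'erard--Tzvetkov strategy---but it is \emph{not} the route taken here. The paper recasts the problem semiclassically ($h=\lambda^{-1}$, $\sigma=\mu^{-1}$), factorises the Laplace-like symbol so that one frequency variable becomes a ``time'' $t$, and writes each cluster via the propagator of a first-order evolution equation (Section~\ref{sec:bilinear}). The bilinear estimate then becomes a Strichartz-type $TT^\star$ bound for a product propagator $W_{h,\sigma}(t)$: one computes the kernel of $W_{h,\sigma}(t)W_{h,\sigma}^\star(s)$, proves $L^1\!\to\!L^\infty$ and $L^2\!\to\!L^2$ dispersive bounds in three regimes of $|t-s|$ (compared to $\sigma$ and $h$), interpolates, and resolves the time integral by Young/Hardy--Littlewood--Sobolev. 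The near-parallel case you flag is handled not by an angular Whitney decomposition but by a separate ``Case~2'' in which the two clusters are put in evolution form with \emph{different} distinguished time variables; transversality is then automatic. What the paper's approach buys is a uniform proof for all $p\ge 2$ and for general Laplace-like quasimodes; what your approach buys is a more direct path to the $L^2$ endpoint without the evolution-equation machinery.

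There is a genuine gap in your sharpness discussion. Gaussian beams along a common geodesic give $\|uv\|_{L^2}\gtrsim\lambda^{(n-1)/4}$, which saturates the bound only for $n=2$ and (up to the log) $n=3$; for $n\ge 4$ this falls short of $\lambda^{(n-2)/2}$. The saturating examples in higher dimensions are \emph{zonal} harmonics concentrated at a point (see the remark at the end of \S\ref{sec:SH} and \cite{BGT4}), not beams. Also, your claim that the $|\log\lambda|^{1/2}$ is ``unavoidable'' overreaches: the paper, like \cite{BGT4}, states sharpness only modulo the log; whether the log is genuinely present in the $n=3$ estimate remains open.
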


Note that if $u$ is an eigenfunction with eigenfrequency $\lambda$, then $\chi_\lambda(u)=u$. So Theorem \ref{thm:BGT} in particular applies to eigenfunctions.

The bounds in Theorem \ref{thm:BGT} depend \textit{only} on the lower eigenfrequency $\lambda$. In this paper, we generalise Theorem \ref{thm:BGT} to  $L^p$ bilinear eigenfunction estimates for $p>2$. In this case, the bound depends on  both of the  eigenfrequencies. We are able to derive a full range of sharp (modulo some log loss when $n=3$ and $p=2$) estimates in all dimensions. The two main themes that arise from our results are:
\begin{enumerate}[(1).]
\item The sharp bound for the $L^p$ norm of the product of two eigenfunctions is better than that obtained by
$$\text{H\"older's inequality}+\text{Sogge's $L^p$ eigenfunction estimates};$$
\item In the sharp bound, the higher eigenfrequency has a smaller exponent than the lower eigenfrequency. The extreme examples of this is $L^2$ bilinear estimates, the exponent of the higher frequency is $0$.
\end{enumerate}

From a technical perspective it is just as easy to work with quasimodes (rather than exact eigenfunctions). So in this paper we prove the $L^p$ bilinear estimates for quasimodes. To this end, it is convenient to work in the semiclassical setting. Denote $h=\lambda^{-1}$ the semiclassical parameter. Then an eigenfunction $u$ satisfies $P(h)u(h)=0$, where
$$P(h)=-h^2\Delta-1.$$
We write $p(x,hD)$ as a semiclassical pseudodifferential operator with symbol $p(x,\xi)$ and study $O_{L^{2}}(h)$ quasimodes as given by the following definition.

\begin{defn}[Quasimodes]\label{hquasimodes}
A family $\{u(h)\}$ ($0<h\le h_0\ll1$) is said to be an $O_{L^{2}}(h)$ quasimode of a semiclassical pseudodifferential operator $p(x,hD)$ if
$$\norm{p(x,hD)u(h)}_{L^{2}}\lesssim{}h\norm{u(h)}_{L^{2}}.$$
\end{defn}

In the semiclassical framework, we can treat quasimodes of semiclassical pseudodifferential operators that are similar to $-h^2\Delta-1$ in the same fashion as quasimodes of the Laplacian. We call such operators Laplace-like (see Definition \ref{laplacelike}). Our main theorem states that
\begin{thm}[$L^p$ bilinear quasimode estimates]\label{thm:bilinear}
Assume that $p(x,\xi)$ is a Laplace-like smooth symbol. Let $u(h)$ and $v(\sigma)$ be two families of $O_{L^2}(h)$ and $O_{L^2}(\sigma)$ quasimodes of $p(x,hD)$ and $p(x,\sigma{}D)$, respectively. Suppose that $0<\sigma\le h\le h_0$ for $h_0\ll1$ and both $u(h)$ and $v(\sigma)$ admit localisation property (see Definition \ref{localised}). Then
$$\|u(h)v(\sigma)\|_{L^p}\lesssim{}G_{n,p}(h,\sigma)\norm{u(h)}_{L^{2}}\norm{v(\sigma)}_{L^{2}},$$
where for $n=2$,
$$G_{n,p}(h,\sigma)=
\begin{cases}
h^{-\frac14}\sigma^{\frac{1}{2p}-\frac14}\quad & \text{for }2\le p\le3,\\
h^{\frac{3}{2p}-\frac34}\sigma^{\frac{1}{2p}-\frac14}\quad & \text{for }3\le p\le6,\\
h^{-\frac12}\sigma^{\frac 2p-\frac12}\quad & \text{for }6\le p\le\infty;
\end{cases}$$
for $n\geq{}3$, $(n,p)\neq{}(3,2)$,
$$G_{n,p}(h,\sigma)=\begin{cases}
h^{-\frac{3(n-1)}{4}+\frac{n+1}{2p}}\sigma^{-\frac{n-1}{4}+\frac{n-1}{2p}}\quad&\text{for }2\leq{}p\leq\frac{2(n+1)}{n-1},\\
h^{-\frac{n-1}{2}}\sigma^{-\frac{n-1}{2}+\frac{n}{p}}\quad&\text{for }\frac{2(n+1)}{n-1}\leq{}p\leq\infty;\end{cases}$$
and
$$G_{3,2}(h,\sigma)=h^{-\frac{1}{2}}|\log h|^{\frac12}.$$
Moreover, all these estimates are sharp (modulo the $\log$ loss in the case $(n,p)=(3,2)$).
\end{thm}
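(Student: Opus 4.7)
Our strategy is to combine a microlocal reduction of $u(h)$ and $v(\sigma)$ to a half-wave normal form with a short list of endpoint bilinear estimates in $p$, and then to carry out bilinear complex interpolation among them. Using the localisation property, each quasimode is first decomposed into pieces whose semiclassical wavefront set lies in a small conic neighbourhood of a point on the characteristic variety $\{p=0\}$. Conjugating by a suitable semiclassical FIO puts $p(x,hD)$ locally in the normal form $hD_n-q(x,hD')$, so each microlocalised piece is representable as a semiclassical oscillatory integral at its own scale ($h$ or $\sigma$), and the pieces can be reassembled with only a bounded loss by almost-orthogonality. We then establish the target inequality at a small set of endpoint values of $p$: at $p=2$ this is Theorem~\ref{thm:BGT}, whose bound depends only on $h$; at $p=\infty$ it follows from the Koch--Tataru--Zworski quasimode $L^\infty$ bound, which gives $\|u(h)v(\sigma)\|_{L^\infty}\lesssim h^{-(n-1)/2}\sigma^{-(n-1)/2}$; at the Sogge exponent $p_0=2(n+1)/(n-1)$ it follows from H\"older's inequality $\|u(h)v(\sigma)\|_{L^{p_0}}\leq\|u(h)\|_{L^\infty}\|v(\sigma)\|_{L^{p_0}}$ combined with the linear $L^\infty$ bound on $u$ and the linear $L^{p_0}$ Sogge--KTZ bound on $v$. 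In dimension two an extra endpoint $p=3$ is needed: the claimed $G_{2,3}=h^{-1/4}\sigma^{-1/12}$ is strictly sharper than what interpolation of the $p=2$ and $p=6$ endpoints alone provides, and we prove it by a direct bilinear oscillatory integral estimate applied to the two half-wave parametrices at scales $h$ and $\sigma$.

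With these endpoints in hand, the full bound $G_{n,p}(h,\sigma)$ follows by bilinear complex interpolation (Riesz--Thorin applied to the bilinear map $(f,g)\mapsto u(h)v(\sigma)$ from $L^2\times L^2$ into $L^p$), and the logarithmic loss for $(n,p)=(3,2)$ is inherited directly from Theorem~\ref{thm:BGT}. Sharpness in each segment of $p$ is verified separately by exhibiting explicit quasimodes modelled on highest-weight and zonal spherical harmonics on $\s^n$, or on Gaussian-beam and coherent-state concentrations along closed geodesics, whose interaction saturates the claimed bound. The chief obstacle is the intermediate endpoint $p=3$ in two dimensions, which cannot be obtained from H\"older combined with linear Sogge bounds nor by interpolating the $L^2$ and $L^6$ bilinear estimates: it requires a genuinely bilinear oscillatory integral estimate exploiting the two semiclassical scales $h$ and $\sigma$ simultaneously, for example via a stationary-phase analysis of the product of the two half-wave parametrices. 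A secondary technical issue is to preserve $L^p$ almost-orthogonality when summing the microlocal pieces, since Plancherel is unavailable for $p\neq 2$ and some care with a square-function-type bound is required.
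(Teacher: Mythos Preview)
Your interpolation strategy is sound in outline and genuinely different from the paper's approach. The paper does not interpolate: it builds a bilinear parametrix $W_{h,\sigma}(t)=U_h(t)\otimes\widetilde U_\sigma(t)$ after reducing both quasimodes to half-wave form, runs a $TT^\star$ argument, and estimates the kernel $W_{h,\sigma}(t)W_{h,\sigma}^\star(s)$ in three regimes ($|t-s|\lesssim\sigma$, $\sigma\lesssim|t-s|\lesssim h$, $|t-s|\gtrsim h$) simultaneously for all $p$, with the critical Hardy--Littlewood--Sobolev step occurring at $p=(n+1)/(n-1)$ in Regime~3. Your plan instead hits $p=2$, $p_0=\tfrac{2(n+1)}{n-1}$, $p=\infty$ (and $p=3$ when $n=2$) and fills in by log-convexity of $L^p$ norms; I checked that the exponents of $G_{n,p}$ are indeed affine in $1/p$ between consecutive endpoints, so this works formally. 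The $p_0$ and $p=\infty$ endpoints follow from H\"older plus the linear Koch--Tataru--Zworski bounds exactly as you say.

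There are two real gaps. First, Theorem~\ref{thm:BGT} is stated and proved by Burq--G\'erard--Tzvetkov for spectral clusters $\chi_\lambda(f)$, not for general $O_{L^2}(h)$ quasimodes of a Laplace-like operator; you need the quasimode version as an input, and that is not in the literature you cite---it is one of the things this paper proves. Second, and more seriously, you correctly identify the two-dimensional $p=3$ endpoint $G_{2,3}=h^{-1/4}\sigma^{-1/12}$ as the crux (it is strictly better than any H\"older-plus-linear combination and than interpolating your $p=2$ and $p=6$ bounds), but ``a stationary-phase analysis of the product of the two half-wave parametrices'' is not a proof sketch---it is a restatement of the problem. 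Obtaining that bound requires essentially the full machinery of the paper's Proposition~\ref{dsdf}: the three-regime decomposition in $|t-s|$, the mixed $L^1\!\to\! L^\infty$ and $L^2\!\to\! L^2$ dispersive/energy bounds on $W_{h,\sigma}(t)W_{h,\sigma}^\star(s)$ exploiting both scales, and the HLS inequality at the borderline exponent in Regime~3. You also do not address the case where the two microlocalised pieces have half-wave normal forms with respect to \emph{different} coordinate directions (the paper's Case~2, Proposition~\ref{dd}); your H\"older endpoints are insensitive to this, but your proposed $p=3$ argument is not. In short, the interpolation scaffold is fine, but its load-bearing pillars---the quasimode $L^2$ bilinear bound and the $n=2$, $p=3$ bound---are exactly the hard content of the paper, and your proposal does not yet supply them.
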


We point out that $\chi_\lambda(f)$ considered in Theorem \ref{thm:BGT} is an $O_{L^{2}}(h)$ quasimode of $-h^2\Delta-1$ with $h=\lambda^{-1}$. (See Section \ref{sec:qm}.) Similarly, $\chi_\mu(g)$ is an $O_{L^{2}}(\sigma)$ quasimode of $-\sigma^2\Delta-1$ with $\sigma=\mu^{-1}$. Therefore, the $L^p$ bilinear spectral cluster estimates follow as a simple consequence of the above theorem. Moreover, they are also sharp.

\begin{thm}[$L^p$ bilinear spectral cluster estimates]\label{thm:efnbilinear}
Suppose that $\mu\ge\lambda\ge1$. Then
$$\left\|\chi_\lambda(f)\chi_\mu(g)\right\|_{L^2}\lesssim G_{n,p}(\lambda^{-1},\mu^{-1})\|f\|_{L^2}\|g\|_{L^2},$$
where $G_{n,p}$ is as given in Theorem \ref{thm:bilinear}. In particular, let $u$ and $v$ be two $L^2$-normalized eigenfunctions with eigenvalues $\lambda^2$ and $\mu^2$. Then
$$\|uv\|_{L^p}\lesssim{}G_{n,p}(\lambda^{-1},\mu^{-1}).$$
Moreover, the estimates are sharp on the sphere.
\end{thm}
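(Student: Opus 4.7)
The plan is to reduce to the quasimode estimates in Theorem~\ref{thm:bilinear} by showing that the spectral clusters $\chi_\lambda(f)$ and $\chi_\mu(g)$ are $O_{L^2}$ quasimodes of the Laplace-like operators $-h^2\Delta-1$ and $-\sigma^2\Delta-1$ with $h=\lambda^{-1}$ and $\sigma=\mu^{-1}$. Once this (together with the localisation hypothesis) is verified, Theorem~\ref{thm:bilinear} directly delivers the bilinear bound, and the elementary $L^2$-boundedness of $\chi_\lambda$ replaces $\|\chi_\lambda(f)\|_{L^2}$ by $\|f\|_{L^2}$ on the right-hand side.

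The quasimode bound is a short functional calculus computation. Expanding $f=\sum_j\hat f_j\phi_j$ in an orthonormal eigenbasis with $-\Delta\phi_j=\tau_j^2\phi_j$, we have
$$(-h^2\Delta-1)\chi_\lambda(f)=\sum_j\frac{(\tau_j-\lambda)(\tau_j+\lambda)}{\lambda^2}\,\chi(\tau_j-\lambda)\,\hat f_j\,\phi_j.$$
Since $\chi$ has compact support, the summand is nonzero only when $|\tau_j-\lambda|\lesssim 1$, in which case $|(\tau_j-\lambda)(\tau_j+\lambda)|/\lambda^2\lesssim\lambda^{-1}=h$. Parseval then yields $\|(-h^2\Delta-1)\chi_\lambda(f)\|_{L^2}\lesssim h\|\chi_\lambda(f)\|_{L^2}$, so $\chi_\lambda(f)$ is an $O_{L^2}(h)$ quasimode, and the same argument handles $\chi_\mu(g)$. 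The localisation property of Definition~\ref{localised} is equally straightforward: $\chi_\lambda(f)$ is sharply localised in frequency to the shell $\tau_j\in[\lambda-R,\lambda+R]$, which after semiclassical rescaling is the characteristic set $\{|\xi|_g=1\}$ of the principal symbol of $P(h)$.

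Plugging these into Theorem~\ref{thm:bilinear} and using $\|\chi_\lambda(f)\|_{L^2}\le\|\chi\|_{L^\infty}\|f\|_{L^2}$ gives
$$\|\chi_\lambda(f)\chi_\mu(g)\|_{L^p}\lesssim G_{n,p}(\lambda^{-1},\mu^{-1})\,\|f\|_{L^2}\|g\|_{L^2}.$$
The eigenfunction version $\|uv\|_{L^p}\lesssim G_{n,p}(\lambda^{-1},\mu^{-1})$ follows either by picking $\chi$ with $\chi(0)=1$, or more directly by applying Theorem~\ref{thm:bilinear} to $u$ and $v$, which are themselves (exact) quasimodes. So this upper-bound half of the theorem is essentially a formality once Theorem~\ref{thm:bilinear} is in hand.

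The substantive part is sharpness on $\s^n$. The natural candidates are the two classical families of spherical harmonics: highest-weight harmonics $Y_\ell$ of degree $\ell$ concentrated in an $\ell^{-1/2}$-tube around an equatorial geodesic (which saturate Sogge's estimate for $2\le p\le 2(n+1)/(n-1)$), and zonal harmonics concentrated at a pole within an $\ell^{-1}$-cap (which saturate the range $p\ge 2(n+1)/(n-1)$). I would pair $Y_\lambda$ and $Y_\mu$ of matching concentration type, localise their product to the common concentration set, and evaluate the $L^p$ norm there; the resulting exponents of $\lambda$ and $\mu$ must match $G_{n,p}(\lambda^{-1},\mu^{-1})$ in each regime, with mixed zonal/highest-weight pairings covering the transition exponents. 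The main obstacle will be $(n,p)=(3,2)$, where the additional $|\log h|^{1/2}$ factor cannot arise from a single eigenfunction pair; one must superpose incoherently over $\sim|\log h|$ eigenvalues clustering near $\lambda$ (and $\mu$) and exploit $L^2$ orthogonality to extract the logarithmic loss.
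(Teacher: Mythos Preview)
Your reduction of the upper bound to Theorem~\ref{thm:bilinear} is correct and is exactly what the paper does (indeed the paper does not even write out this step, declaring it a ``simple consequence''). Your functional-calculus verification that $\chi_\lambda(f)$ is an $O_{L^2}(h)$ quasimode is fine.

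For sharpness, your proposal for the two extreme ranges is also what the paper does on $\s^2$: zonal $\times$ zonal saturates $p\ge 6$, and highest-weight $\times$ highest-weight saturates $2\le p\le 3$. The gap is in the midrange $3<p<6$ (and its higher-dimensional analogue). A simple mixed zonal/highest-weight pairing does \emph{not} saturate these estimates. If you take $u=Z_{\lfloor\lambda\rfloor}$ and a single rotated Gaussian beam $v=Q_{\lfloor\mu\rfloor}$ passing through the pole, the product is large on a $\lambda^{-1}\times\mu^{-1/2}$ rectangle, giving $\|uv\|_{L^p}\gtrsim\lambda^{1/2-1/p}\mu^{1/4-1/(2p)}$; but the target is $\lambda^{3/4-3/(2p)}\mu^{1/4-1/(2p)}$, and for $p>2$ the $\lambda$-exponent falls short. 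What is needed is a $\mu$-eigenfunction that concentrates on a thinner tube of dimensions $\lambda^{-1}\times(\sigma h)^{1/2}=\mu^{-(1-2\alpha)}\times\mu^{-(1-\alpha)}$ with $\mu^{1-2\alpha}=\lambda$, mimicking the flat-model quasimode $T^\sigma_{\alpha_h}$. The paper builds this eigenfunction as a coherent superposition of $\sim\mu^{1/2-\alpha}$ rotated Gaussian beams whose poles are spread along an arc of angular width $\sim\mu^{-\alpha}$, all phased to interfere constructively near the north pole; controlling the $L^2$ norm of this superposition requires an almost-orthogonality lemma for well-separated Gaussian beams. This construction is the nontrivial content of the sharpness proof and is missing from your proposal.

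A smaller point: you should not try to recover the $|\log h|^{1/2}$ at $(n,p)=(3,2)$. The sharpness claim in both Theorems~\ref{thm:bilinear} and~\ref{thm:efnbilinear} is explicitly modulo that logarithm, so no saturating example is needed (or expected) for it.
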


\subsection*{Application of bilinear eigenfunctions to nonlinear dispersive equations}
An important application of the bilinear eigenfunctions estimates lies in the study of nonlinear dispersive equations on Riemannian manifolds. Consider the Cauchy problem involving cubic (focusing or defocusing) nonlinear Schr\"odinger equation on a compact Riemannian manifold $\M$:
\begin{equation}\label{eq:Schro}
\begin{cases}
i\partial_tu+\Delta u=\pm|u|^2u,\\
u(0,x)=u_0(x).
\end{cases}
\end{equation}
There is a long history in studying the well/ill-posedness of this system, see e.g. Burq, G\'erard, and Tzvetkov \cite{BGT2, BGT5} and the references therein. Denote $H^s(\M)$ the Sobolev space on $\M$. Burq, G\'erard, and Tzvetkov \cite[Theorem 1]{BGT2} proved that \eqref{eq:Schro} is locally well-posed in $H^s$ for $s>1/4$ on the two-sphere $\s^2$. Their argument combines the $L^2$ bilinear eigenfunction estimate \eqref{eq:BGT} with the eigenvalue distribution on $\s^2$. On the questions of ill-posedness, their earlier work \cite{BGT1} showed that \eqref{eq:Schro} is ill-posed in $H^s$ for $s<1/4$. Therefore, $H^{1/4}$ is the critical regularity on $\s^2$. We express this by saying that the critical threshold $s_c(\s^2)=1/4$, following the notation in \cite{BGT2}. 

It is only on special manifolds (such as the sphere) that such critical regularity threshold is known. Another known example is the torus.  Bourgain \cite{Bo} and Burq, G\'erard, and Tzvetkov \cite{BGT1}, showed that  $s_c(\mathbb T^2)=0$  where $\mathbb T^2$ is the two-dimensional torus. It remains \textit{open} to find the critical threshold on general Riemannian manifolds, and to relate it to geometry of the manifold. For more information about nonlinear Schr\"odinger equations on Riemannian manifolds, we refer to \cite{Bo, BGT1, BGT3, BGT4, BGT5} and the references therein.

For the nonlinear systems with other nonlinear terms such as $|u|^{2k}u$, one may use $L^p$ bilinear eigenfunction estimates to establish similar well-posedness. See the recent work of Yang \cite{Y} about such nonlinear Schr\"odinger systems on zoll manifolds.

\subsection*{Related literature}
Burq, G\'erard, and Tzvetkov \cite[Theorem 3]{BGT5} also proved the following $L^2$ trilinear eigenfunction estimates in dimension two and three.

\begin{thm}[$L^2$ trilinear spectral cluster estimates]
Let $(\M,g)$ be an $n$-dimensional compact manifold and $\chi\in C^\infty_0(\R)$. Given $1\le\lambda\le\mu\le\nu$ and $\|e\|_{L^2}=\|f\|_{L^2}=\|g\|_{L^2}=1$, we have
$$\left\|\chi_\lambda(e)\chi_\mu(f)\chi_\nu(g)\right\|_{L^2}\le
\begin{cases}
C\lambda^\frac14\mu^\frac14 & \text{if }n=2;\\
C_\ve\lambda^{1-\ve}\mu^{\frac12-\ve} & \text{if }n=3.
\end{cases}$$
Here, $\ve\in(0,1]$; $C_\ve$ depends on $\ve$ and $\M$; but $C$ and $C_\ve$ are independent of $\lambda$, $\mu$, and $\nu$.
\end{thm}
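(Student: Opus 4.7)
The plan is to start from the $L^2$ bilinear spectral cluster estimate of Theorem \ref{thm:BGT}, reduce the trilinear quantity to a bilinear one via Hölder's inequality, and then upgrade the resulting baseline bound by a microlocal/$TT^*$ argument.

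First, a baseline. Pairing the two highest frequencies gives $\|efg\|_{L^2}\le\|fg\|_{L^2}\|e\|_{L^\infty}$. By Theorem \ref{thm:BGT} applied to $(f,g)$ (whose lower eigenfrequency is $\mu$) together with Sogge's linear bound $\|e\|_{L^\infty}\lesssim\lambda^{(n-1)/2}$, one obtains $\|efg\|_{L^2}\lesssim\lambda^{1/2}\mu^{1/4}$ in dimension two and $\|efg\|_{L^2}\lesssim\lambda\mu^{1/2}|\log\mu|^{1/2}$ in dimension three. Both fall short of the asserted bounds: in dimension two the exponent of $\lambda$ must be halved, while in dimension three the logarithmic factor must be traded for the genuine power saving $\lambda^{-\ve}\mu^{-\ve}$. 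Trying instead the dual pairing $\|efg\|_{L^2}\le\|e\|_{L^{p'}}\|fg\|_{L^p}$ with the $L^p$ bilinear estimate of Theorem \ref{thm:bilinear} leads to the same baseline, because $\nu$-independence of $\|fg\|_{L^p}$ forces $p=2$ and hence $p'=\infty$.

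To reach the sharp bound I would run the bilinear strategy of \cite{BGT3} in a trilinear guise. Decompose the lowest-frequency factor $e=\sum_j e_j$ into wave packets adapted to $1\times\lambda^{-1/2}$ tubes in phase space, and estimate $\|e_j\cdot(fg)\|_{L^2}$ via the bilinear method applied to the pair $(e_j,fg)$. The key observation is that, although $fg$ is not itself a spectral cluster, it is frequency-localised in a shell of width $\mu$ around $\nu$; after rescaling it behaves like a ``fat'' cluster at frequency $\nu$, so the bilinear technique still applies. One then sums over $j$ using almost-orthogonality in dimension two; in dimension three an additional dyadic decomposition of the angular separation between tubes is required, and this is the source of the $\ve$-loss once the attendant $\log$ is converted to a power.

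The main obstacle is the almost-orthogonality step: a naive triangle inequality over the wave packets discards the decoupling between tubes and merely reproduces the baseline bound. I would organise the sum through a $TT^*$ computation so that the dispersive decay of the half-wave propagator furnishes the extra gain of $\lambda^{1/4}$ (respectively $(\lambda\mu)^{\ve}$) needed to close the argument. The bookkeeping is most delicate in dimension three, where the tighter angular geometry forces the dyadic sum over frequency gaps that generates the $\ve$-loss; removing this loss appears to be out of reach of present methods.
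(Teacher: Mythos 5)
The paper does not actually prove this theorem: it is quoted verbatim from Burq--G\'erard--Tzvetkov \cite[Theorem~3]{BGT4} as background, so there is no in-paper argument to compare yours against. (The only use the authors make of it is the remark that their own $L^3$ bilinear estimate with $\lambda\approx\mu$ can be recovered from the $n=2$ trilinear bound.) Any assessment of your proposal therefore has to be against the BGT argument itself, which proceeds via Sogge's Carleson--Sj\"olin parametrix for $\chi_\lambda$, reduces the trilinear quantity to an explicit trilinear oscillatory integral, and estimates that integral directly by stationary phase (with the $\varepsilon$-loss in $n=3$ coming from an interpolation step), not by a wave-packet almost-orthogonality scheme.

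Your proposal identifies the right obstruction --- naive H\"older plus the bilinear estimate only yields $\lambda^{1/2}\mu^{1/4}$ in $n=2$ and $\lambda\mu^{1/2}|\log\mu|^{1/2}$ in $n=3$ --- but the refinement you sketch has two genuine gaps. First, the bilinear machinery in this paper and in \cite{BGT3, BGT4} applies to pairs of genuine spectral clusters; $fg$ is a product, frequency-localised only to a shell of width $O(\mu)$, and calling it a ``fat cluster'' does not let you invoke the bilinear estimate. If one instead dyadically splits $fg=\sum_k P_k(fg)$ into $O(\mu)$ unit-width pieces, applies the $L^2$ bilinear bound to each $e\cdot P_k(fg)$, groups the $k$ into $\lambda$-blocks for orthogonality, and uses the bilinear bound once more for $\|fg\|_{L^2}$, the best you get is $\lambda^{3/4}\mu^{1/4}$ in $n=2$, still off by $\lambda^{1/2}$ from the target. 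Second, almost-orthogonality of the wave packets $e_j$ in phase space does not pass to the products $e_j\cdot fg$: multiplication by $fg$ smears frequency support by $O(\mu)$, so the products $e_j fg$ and $e_k fg$ are not nearly orthogonal without a nontrivial $TT^*$ computation, which your sketch leaves entirely open; defaulting to the triangle inequality over $j$ only reproduces the baseline. In short, the plan correctly diagnoses where the difficulty lies but does not contain the oscillatory-integral input that actually closes the gap in \cite{BGT4}.
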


\begin{rmk}
We point out that when $\mu\approx\lambda$ our $L^3$ bilinear estimate in Theorem \ref{thm:bilinear} can be derived from the above $L^2$ trilinear one. 
\end{rmk}

We remark that the bilinear eigenfunction estimates are sharp on the sphere. However, they are far from optimal in the case of torus. For example, on the two dimensional torus $\mathbb T^2$, given two $L^2$-normalized eigenfunction $u$ and $v$ with eigenvalues $\lambda^2$ and $\mu^2$,
$$\|uv\|_{L^2}\le\|u\|_{L^4}\|v\|_{L^4}\le C,$$
where $C$ is independent of $\lambda$ and $\mu$. This follows the classical result of Zygmund \cite{Zy} that $\|u\|_{L^4}\le C$.

If the sectional curvatures on the manifold are negative everywhere, then the $L^\infty$ eigenfunction estimate can be improved. See B\'erard \cite{Be}. Let $u$ be an $L^2$-normalized eigenfunction with eigenvalue $\lambda^2$. Then
$$\|u\|_{L^\infty}\lesssim\frac{\lambda^\frac{n-1}{2}}{\sqrt{\log\lambda}}.$$
It thus easily follows that the $L^\infty$ bilinear estimate in Theorem \ref{thm:efnbilinear} can be improved by a logarithmical factor:  Let $u$ and $v$ be two $L^2$-normalized eigenfunction with eigenvalues $\lambda^2$ and $\mu^2$. Then
$$\|uv\|_{L^\infty}\lesssim\frac{(\lambda\mu)^\frac{n-1}{2}}{\sqrt{\log\lambda\log\mu}}.$$
It is interesting to see if the $L^p$ bilinear estimate for $p<\infty$ can also be improved on such manifolds.

\subsection*{Organisation}
We organise the paper as follows. In Section \ref{sec:loc}, we use semiclassical analysis and localisation to reduce the problem to a local one. In Section \ref{sec:bilinear}, we prove Theorem \ref{thm:bilinear}. In Section \ref{sec:sharpness}, we show that the $L^p$ bilinear estimates are sharp by constructing appropriate quasimodes and spherical harmonics. Section \ref{sec:SA} is an appendix containing the basic semiclassical analysis on which this paper relies. 

Throughout this paper, $A\lesssim B$ ($A\gtrsim B$) means $A\le cB$ ($A\ge cB$) for some constant $c$ independent of $\lambda$ or $h$; $A\approx B$ means $A\lesssim B$ and $B\lesssim A$; the constants $c$ and $C$ may vary from line to line.

%%%%%%%%%%%%%%%%%%%%%%%%%%%%%%%%%%%%%%%%%%%%%%%%%%%%%%%%%%%%
\section{Semiclassical analysis and localisation}\label{sec:loc}
In this section, we use semiclassical analysis to reduce the $L^p$ bilinear quasimode estimates to a localised problem. For the readers' convenience, we include a brief discussion on the background of semiclassical analysis in the appendix (Section \ref{sec:SA}). 

As any two quantisation procedures differ only by an $O_{L^{2}}(h)$ term, we adopt the left quantisation for symbols, that is,
$$p(x,hD)u(x)=\frac{1}{(2\pi{}h)^{n}}\int{}e^{\frac{i}{h}\langle{}x-y,\xi\rangle}p(x,\xi)u(y)\,d\xi dy.$$
We require that both functions $u(h)$ and $v(\sigma)$ are semiclassically localised with respect to the relevant parameter $h$ or $\sigma$, respectively (see Definition \ref{localised}) and that $p(x,\xi)$ is Laplace-like (see Definition \ref{laplacelike}).

\begin{defn}[Semiclassical localisation]\label{localised} 
A tempered family $\{u(h)\}$ is semiclassically localised if there exists a function $\chi\in C^\infty_0(T^*\M)$ such that
$$u(h)=\chi(x,hD)u(h)+O_{L^2}(h^\infty).$$
\end{defn}

An immediate consequence is that if a family $u=u(h)$ is localised then
\begin{equation}\label{eq:locLp}
\|u\|_{L^q}\le Ch^{n\left(\frac1q-\frac1p\right)}\|u\|_{L^p}+O_{L^{2}}(h^\infty),
\end{equation}
where $1\le p\le q\le\infty$ (see Theorem \ref{thm:scLp}). In particular,
$$\norm{u}_{L^{p}}\lesssim{}h^{-\frac{n}{2}+\frac{n}{p}}\norm{u}_{L^{2}}.$$
Note that the eigenfunctions $u_j=u(\lambda_j^{-1})$ admit localisation property. However, the above inequality gives a worse estimate than Sogge's estimates in \eqref{eq:Sogge}.

\begin{defn}[Laplace-like symbols and operators]\label{laplacelike}
We say that a symbol $p(x,\xi)$ (or its associated semiclassical pseudodifferential operator) is Laplace-like if 
\begin{enumerate}
\item for all $(x_{0},\xi_{0})\in T^*\M$ such that $p(x_{0},\xi_{0})=0$, $\nabla_{\xi}p(x_{0},\xi_{0})\neq{}0$;
\item for all $x_0\in\M$, the hypersurface $\{\xi\in T^*_x\M:p(x_{0},\xi)=0\}$ has positive definite second fundamental form.
\end{enumerate}
\end{defn}

Now suppose that  $u(h)$ and $v(\sigma)$ are $L^2$ normalised semiclassically localised quasimodes of $p(x,hD)$ and $p(x,\sigma{}D)$, respectively:
$$p(x,hD)u(h)=O_{L^2}(h),\quad\text{and}\quad p(x,\sigma D)v(\sigma)=O_{L^2}(\sigma),$$
where $0\le \sigma\le h\le h_0$. From the assumption, they both admit the localisation property. We may assume that 
$$u(h)=\chi(x,hD)u(h)+O_{L^2}(h^\infty),\quad\text{and}\quad v(\sigma)=\chi(x,\sigma{}D)v(\sigma)+O_{L^2}(\sigma^\infty),$$
where $\chi\in C^\infty_0(K)$ with $K\Subset T^*\M$. Hence,
\begin{eqnarray*}
\|uv\|_{L^p}&=&\left\|\chi(x,hD)u(h)\chi(x,\sigma{}D)v(\sigma)\right\|_{L^p}+O_{L^{2}}(h^\infty)\|\chi(x,\sigma{}D)v(\sigma)\|_{L^p}\\
&&+O_{L^{2}}(\sigma^\infty)\|\chi(x,hD)u(h)\|_{L^p}+O_{L^{2}}(h^\infty \sigma^\infty)\\
&\le&\left\|\chi(x,hD)u(h)\chi(x,\sigma{}D)v(\sigma)\right\|_{L^p}+O_{L^{2}}(h^\infty)\sigma^{-\delta(n,p)}+O_{L^{2}}(\sigma^\infty)h^{-\delta(n,p)}+O_{L^{2}}(h^\infty\sigma^\infty)\\
&\le&\left\|\chi(x,hD)u(h)\chi(x,\sigma{}D)v(\sigma)\right\|_{L^p}+h^{N}\sigma^{-\delta(n,p)}+\sigma^{N}h^{-\delta(n,p)}.
\end{eqnarray*}
Inspecting the definition of $G_{n,p}(h,\sigma)$ and $\delta(n,p)$, we see that $\sigma^{-\delta(n,p)}\lesssim{}G_{n,p}(h,\sigma)$. Hence, we only need to estimate the first term. Now we further localise each quasimode. Let 
$$\chi=\sum_{i=1}^N\chi_i,$$
where $N<\infty$ and $\chi_i$ has arbitrarily small support. Thus,
\begin{eqnarray*}
\left\|\chi(x,hD)u(h)\chi(x,\sigma{}D)v(\sigma)\right\|_{L^p}&=&\left\|\sum_{i=1}^N\chi_i(x,hD)u(h)\sum_{j=1}^N\chi_j(x,\sigma{}D)v(\sigma)\right\|_{L^p}\\
&\le&\sum_{i,j=1}^N\big\|\chi_i(x,hD)u(h)\cdot\chi_j(x,\sigma{}D)v(\sigma)\big\|_{L^p}.
\end{eqnarray*}
We then reduce the problem to a local one by separately estimating each term 
$$\big\|\chi_i(x,hD)u(h)\cdot\chi_j(x,\sigma{}D)v(\sigma)\big\|_{L^{p}}$$
in the summation. Since semiclassical pseudodifferential operators commute up to an $O(h)$ error (see Section \ref{sec:Elliptic}), we have
$$p(x,hD)\chi_{i}(x,hD)u(h)=\chi_{i}(x,hD)p(x,hD)u(h)+hr(x,hD)u(h).$$
Therefore, if $u(h)$ is an $O_{L^{2}}(h)$ quasimode of $p(x,hD)$, then $\chi_{i}(x,hD)u(h)$ is also an $O_{L^{2}}(h)$ quasimode. 

In each patch, we may assume the Riemannian volume coincides with the Lebesgue density in local coordinates. Since the localisation property is consistent with $O_{L^2}(h)$ quasimodes, we have
$$p(x,hD)\chi_i(x,hD)u(h)=O_{L^2}(h),\quad\text{and}\quad p(x,\sigma{}D)\chi_j(x,\sigma{}D)u(\sigma)=O_{L^2}(\sigma).$$
We may further reduce to studying a product $\chi_{i}(x,hD)u(h)\chi_{j}(x,\sigma{}D)v(\sigma)$ where $p(x,\xi)$ is zero somewhere on $\supp\chi_{i}\cup\supp\chi_{j}$. Suppose this were not the case, then one of the following two cases happens.
\subsection{$|p(x,\xi)|>c$ on $\supp\chi_{i}$}
Since
$$p(x,hD)\chi_{i}(x,hD)u=h{}f,\quad\text{where }\norm{f}_{L^{2}}\lesssim{}\norm{u}_{L^{2}},$$
and $p(x,hD)$ is invertible on the support of $\chi_{i}$, we have
$$\chi_{i}(x,hD)u=hp(x,hD)^{-1}f,$$ 
and so 
$$\norm{\chi_{i}(x,hD)u}_{L^{2}}\lesssim{}h\norm{u}_{L^{2}}.$$
This implies by \eqref{eq:locLp} that
$$\norm{\chi_{i}(x,hD)u}_{L^{\infty}}\lesssim h\cdot h^{-\frac n2}\norm{u}_{L^{2}}$$
because $\chi_{i}(x,hD)u$ is semiclassically localised. Therefore, applying H\"older's inequality and Sogge's $L^{p}$ estimates \eqref{eq:Sogge} on $\chi_j(x,\sigma D)v$, we have
\begin{eqnarray*}
\norm{\chi_{i}(x,hD)u\chi_{j}(x,\sigma{}D)v}_{L^{p}}&\lesssim{}&\norm{\chi_{i}(x,hD)u}_{L^{\infty}}\norm{\chi_{j}(x,\sigma{}D)v}_{L^{p}}\\
&\lesssim{}&h^{-\frac n2+1}\sigma^{-\delta(n,p)}\norm{u}_{L^{2}}\norm{v}_{L^{2}},
\end{eqnarray*}
which is a better estimate than given by Theorem \ref{thm:bilinear}. 
\subsection{$|p(x,\xi)|>c$ on $\supp\chi_{j}$} 
Then
$$\norm{\chi_{j}(x,\sigma{}D)v}_{L^{2}}\lesssim{}\sigma\norm{v}_{L^{2}}.$$
This implies by \eqref{eq:locLp} that
$$\norm{\chi_{j}(x,\sigma D)v}_{L^r}\lesssim\sigma\cdot\sigma^{-n\left(\frac1r-\frac12\right)}\norm{v}_{L^{2}}$$
because $\chi_{j}(x,\sigma D)v$ is semiclassically localised. Therefore, applying H\"older's inequality and Sogge's $L^{p}$ estimates \eqref{eq:Sogge} on $\chi_i(x,hD)u$, we have
\begin{eqnarray*}
\norm{\chi_{i}(x,hD)u\chi_{j}(x,\sigma{}D)v}_{L^{p}}&\lesssim&\|\chi_{i}(x,hD)u\|_{L^\infty}\|\chi_j(x,\sigma{}D)v\|_{L^p}\\
&\lesssim&h^{-\frac{n-1}{2}}\sigma^{-n\left(\frac1p-\frac12\right)+1}\norm{u}_{L^{2}}\norm{v}_{L^{2}},
\end{eqnarray*}
which is also better than that of Theorem \ref{thm:bilinear}. 

Therefore, we assume that there is an $(x_{0},\xi_{0})\in{}\supp\chi_{i}$ and an $(\tilde{x}_{0},\tilde{\xi}_{0})\in{}\supp\chi_{j}$ such that
$$p(x_{0},\xi_{0})=0,\quad\text{and}\quad p(\tilde{x}_{0},\tilde{\xi}_{0})=0.$$

%%%%%%%%%%%%%%%%%%%%%%%%%%%%%%%%%%%%%%%%%%%%%%%%%%%%%%%%%%%%
\section{Proof of $L^p$ bilinear quasimode estimates}\label{sec:bilinear}
Following the reduction in the previous section, we may now work with the product
$$\chi_{i}(x,hD)u\cdot\chi_{j}(x,\sigma D)v,$$
where $\chi_{i}$ and $\chi_{j}$ have small support, and there is an $(x_{0},\xi_{0})\in{}\supp\chi_{i}$ and an $(\tilde{x}_{0},\tilde{\xi}_{0})\in{}\supp\chi_{j}$ such that
$$p(x_{0},\xi_{0})=0,\quad\text{and}\quad p(\tilde{x}_{0},\tilde{\xi}_{0})=0.$$
In these coordinate patches, we often drop $\chi_i$ and $\chi_j$ and for notational convenience simply write $u$ and $v$. At this point we use our assumptions on $p(x,\xi)$ to factorise the symbol. Symbol factorisation will allow us to write
$$p(x,\xi)=e(x,\xi)(\xi_{i}-a(x,\xi')),\quad{}|e(x,\xi)|>c$$
for some $\xi_{i}$. We associate $x_{i}$ with time $t$ and treat $u$ as an approximate solution to an evolution problem. We then obtain $L^{p}$ estimates by considering them as Strichartz estimates with equal weighting on space and time. This ``variable freezing'' is a well-established technique; see for example Stein \cite{St}, Sogge \cite{So1, So2}, Mockenhaupt-Seeger-Sogge \cite{MSS}, Burq-G\'erard-Tzvetkov \cite{BGT3}, and references therein. In the semiclassical setting Koch-Tataru-Zworski \cite{KTZ} used it to obtain $L^{p}$ estimates for quasimode in a similar setting to that we are considering.   

Recall that the first assumption for $p(x,hD)$ to be Laplace-like is that whenever $p(x_{0},\xi_{0})=0$ the gradient $\nabla_{\xi}p(x_{0},\xi_{0})\neq{}0$. Therefore, there is some $\xi_{k}$ for which $\partial_{\xi_{k}}p(x_{0},\xi_{0})\neq{}0$. In fact, we may choose a $\xi_{k}$ (which we will call $\xi_{1}$) so that if $\xi=(\xi_{1},\xi')$, then
$$\nabla_{\xi'}p(x_{0},\xi_{0})=0.$$
We now divide our problem into two cases. 
\begin{enumerate}
\item[Case 1.]
$$\partial_{\xi_{1}}p(\tilde{x}_{0},\tilde{\xi}_{0})\neq{}0;$$
\item[Case 2.]
$$\partial_{\xi_{1}}p(\tilde{x}_{0},\tilde{\xi}_{0})=0.$$
\end{enumerate}

\noindent\textbf{Case 1}: By the implicit function theorem, we may factorise the symbol $p(x,\xi)$ such that
$$p(x,\xi)=e_{1}(x,\xi)(\xi_{1}-a_{1}(x,\xi'))\quad\text{on }\supp\chi_{i},$$
$$p(x,\xi)=e_{2}(x,\xi)(\xi_{1}-a_{2}(x,\xi'))\quad\text{on }\supp\chi_{j},$$
where both $|e_{1}(x,\xi)|,|e_{2}(x,\xi)|>c>0$. Hence, we may invert $e_{1}(x,hD)$ and $e_{2}(x,\sigma{}D)$ to obtain
$$(hD_{x_{1}}-a_{1}(x,hD_{x'}))u(h)=O_{L^{2}}(h\|u(h)\|_{L^2}),$$
and
$$(\sigma{}D_{x_{1}}-a_{2}(x,\sigma{}D_{x'}))v(\sigma)=O_{L^{2}}(\sigma\|v(\sigma)\|_{L^2}).$$
We will treat this case in Propositions \ref{dsdf} and \ref{dsfs} by setting $x_{1}=t$ and reducing the problem to one involving semiclassical evolution equations. Picking up on that notation we say that in Case 1, $u$ and $v$ propagate in the \textbf{same} direction. Note that 
$$\{\xi\in T^*_x\M:p(x_{0},\xi)=0\}=\{\xi\in T^*_x\M:\xi_{1}=a_{1}(x_{0},\xi')\},$$
and
$$\{\xi\in T^*_x\M:p(\tilde x_{0},\xi)=0\}=\{\xi\in T^*_x\M:\xi_{1}=a_{2}(\tilde x_{0},\xi')\}.$$
Now since
$$\nabla_{\xi'}a_{1}(x_{0},\xi_{0})=0,$$ 
the second fundamental form is given at this point by
$$\frac{\partial^{2}a_{1}}{\partial\xi_{i}'\partial\xi'_{j}},$$
so since $p(x,\xi)$ is Laplace-like the matrix,
$$\frac{\partial^{2}a_{1}}{\partial\xi_{i}'\partial\xi'_{j}}$$
is positive definite. A similar argument gives that 
$$\frac{\partial^{2}a_{2}}{\partial\xi_{i}'\partial\xi'_{j}}$$
is also positive definite. 

\noindent\textbf{Case 2}: Since $\partial_{\xi_{1}}p(\tilde{x}_{0},\tilde{\xi}_{0})=0$ but $\nabla_{\xi}p(\tilde{x}_{0},\tilde{\xi}_{0})\neq{}0$, there is some other $\xi_{k}$ (which we will call $\xi_{2}$) such that
$$\partial_{\xi_{2}}p(\tilde{x}_{0},\tilde{\xi}_{0})\neq{}0.$$
We choose this direction so that if $\xi=(\xi_{1},\xi_{2},\bar{\xi})$, then
$$\nabla_{\bar{\xi}}p(\tilde{x}_{0},\tilde{\xi}_{0})=0.$$
Now we write
$$p(x,\xi)=e_{1}(x,\xi)(\xi_{1}-a_{1}(x,\xi_{2},\bar{\xi}))\quad\text{on }\supp\chi_{i},$$
$$p(x,\xi)=e_{2}(x,\xi)(\xi_{2}-a_{2}(x,\xi_{1},\bar{\xi}))\quad\text{on }\supp\chi_{j}.$$
Again, $|e_{1}(x,\xi)|,|e_{2}(x,\xi)|>c>0$. So we invert $e_{1}(x,hD)$ and $e_{2}(x,\sigma{}D)$ to obtain
$$(hD_{x_{1}}-a_{1}(x,hD_{x_{2}},hD_{\bar{x}}))u=O_{L^{2}}(h\|u(h)\|_{L^2}),$$
and
$$(\sigma{}D_{x_{2}}-a_{2}(x,\sigma{}D_{x_{1}},\sigma{}D_{\bar x}))v=O_{L^{2}}(\sigma\|v(\sigma)\|_{L^2}).$$
We will treat this case in Proposition \ref{dd} by setting $x_{1}=t_{1}$ and $x_{2}=t_{2}$ and reducing the problem to one involving semiclassical evolution equations propagating in \textbf{different} directions. We adopt the notation $x=(x_{1},x_{2},\bar{x})$ and $\xi=(\xi_1,\xi_2,\bar\xi)$. Note that by choosing the support small enough we may assume that
$$|(\partial_{\xi_{2}}a_{1},\nabla_{\bar{\xi}}a_{1})|\leq{}\ve,\quad\text{and}\quad|(\partial_{\xi_{1}}a_{2},\nabla_{\bar{\xi}}a_{2})|\leq{}\ve.$$
As in Case 1, we obtain that the matrices of second derivatives $\partial^2_{(\xi_2,\bar\xi)}a_1$ and $\partial_{(\xi_1,\bar\xi)}^2a_2$ are non-degenerate.

We are now in a position to complete the estimates of Theorem \ref{thm:bilinear} by studying the above two cases.

\subsection{Propagating in the same direction (Case 1)}\label{sec:Case1}

We split this into two further cases. Case 1a when $\sigma<ch$ (for some small constant $c$ dependent only on the manifold), in this case the frequencies of the quasimodes are significantly different. Case 1b is of course when $ch\leq{}\sigma\leq{}h$, in this case the frequencies are close enough to be treated as the same.

\begin{prop}[Case 1a]\label{dsdf}
Assume the notations in Case 1:
$$\norm{(hD_{x_{1}}-a_{1}(x,hD_{x'}))u(h)}_{L^{2}}\lesssim{}h\norm{u(h)}_{L^{2}},$$
and
$$\norm{(\sigma{}D_{x_{1}}-a_{2}(x,\sigma{}D_{x'}))v(\sigma)}_{L^{2}}\lesssim{}\sigma\norm{v(\sigma)}_{L^{2}},$$
where $\partial^2_{\xi'\xi'}a_1$ and $\partial_{\xi'\xi'}^2a_2$ are positive definite. Suppose that $\sigma<ch$ for some small $c$ dependent only on the manifold. Then
$$\norm{uv}_{L^{p}}\lesssim{}G_{n,p}(h,\sigma)\norm{u}_{L^{2}}\norm{v}_{L^{2}},$$
where $G_{n,p}(h,\sigma)$ is given in Theorem \ref{thm:bilinear}.
\end{prop}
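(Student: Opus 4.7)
My plan is to treat $u$ and $v$ as approximate solutions to semiclassical evolution equations in the time variable $t:=x_1$, establish endpoint bilinear estimates at $p=2$ and at $p_0:=\tfrac{2(n+1)}{n-1}$, and recover the full range by Riesz--Thorin interpolation (and, for $p\geq p_0$, by H\"older with Sogge's $L^p$ bound on $v$). The first step is to solve the Hamilton--Jacobi equations $\partial_t\phi_j=a_j(x,\partial_{x'}\phi_j)$ locally and to represent each quasimode as a WKB/FIO parametrix at its own semiclassical scale: $u=U(t)u_0+O_{L^2}(h)$ where $U(t)$ is an $h$-FIO with phase $\phi_1$, and similarly $v=V(t)v_0+O_{L^2}(\sigma)$ with phase $\phi_2$. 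The Laplace-like condition gives $\det\partial^2_{x'\eta}\phi_j\neq 0$, which in turn produces the full semiclassical dispersive estimate $\norm{U(t)}_{L^1\to L^\infty}\lesssim h^{-(n-1)}|t|^{-(n-1)/2}$ (and likewise for $V$ with $\sigma$) on the short times that matter.

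For the $p=p_0$ endpoint I would combine the parametrix-based $L^\infty$ bound $\norm{u}_{L^\infty}\lesssim h^{-(n-1)/2}\norm{u}_{L^2}$ (equivalent to the Koch--Tataru--Zworski $L^\infty$ quasimode estimate) with the $L^{p_0}$ quasimode bound on $v$, yielding
$$\norm{uv}_{L^{p_0}}\leq \norm{u}_{L^\infty}\norm{v}_{L^{p_0}}\lesssim h^{-(n-1)/2}\sigma^{-\delta(n,p_0)}=G_{n,p_0}(h,\sigma).$$
For the $L^2$ endpoint I would implement the standard $TT^*$ reformulation $\norm{uv}_{L^2}^2=\langle|u|^2,|v|^2\rangle$, substitute the two parametrix representations, and reduce to estimating the oscillatory convolution kernel obtained by integrating out the data $u_0,v_0$ and the fibre variables. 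The transversality of the characteristic surfaces $\{\xi_1=a_j(x,\xi')\}$, forced by positive definiteness of $\partial^2_{\xi'\xi'}a_j$ together with the frequency gap $\sigma<ch$, allows a non-degenerate stationary-phase analysis whose outcome is the Burq--G\'erard--Tzvetkov bilinear bound $G_{n,2}(h,\sigma)$, with the logarithmic loss in dimension three coming out of the expected borderline scale in the Schur/Young convolution estimate.

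The main obstacle will be this $L^2$ bilinear estimate: the separation $\sigma<ch$ must be used essentially, both to prevent the two characteristic surfaces from becoming tangent on the scale of the smaller parameter $\sigma$ and to keep the short-time dispersive integral from losing logarithmic (or polynomial) factors off the sharp exponent $-(n-2)/2$ (for $n\geq 4$). Once both endpoints are in place, Riesz--Thorin interpolation gives $G_{n,p}(h,\sigma)$ throughout $2\leq p\leq p_0$ in dimensions $n\geq 3$; for $n=2$, where the stated formula exhibits an extra break at $p=3$, a direct argument combining the $L^2$ bilinear bound with Sogge's $L^p$ bound for $v$ (via a weighted H\"older inequality applied to the factor $|v|^{p-2}$) recovers the claimed exponents in the intermediate range.
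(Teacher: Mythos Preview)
Your endpoint-plus-interpolation strategy does give $G_{n,p}$ for $n\geq4$, because $\log G_{n,p}(h,\sigma)$ is affine in $1/p$ on $[1/p_{0},1/2]$ there; bilinear interpolation between the Burq--G\'erard--Tzvetkov $L^{2}$ bound and the H\"older bound $\norm{u}_{L^{\infty}}\norm{v}_{L^{p_{0}}}$ then suffices, and this is shorter than the paper's argument in those dimensions. For $n=3$ the interpolation drags the factor $|\log h|^{1/2}$ from $p=2$ into every $2<p<4$, which the sharp statement does not allow. The serious gap is $n=2$: the map $1/p\mapsto\log G_{2,p}(h,\sigma)$ is only piecewise affine on $[1/6,1/2]$, with a break at $p=3$, and lies strictly below the chord joining its endpoints. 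At $p=3$, for instance, the chord gives $h^{-3/8}\sigma^{-1/12}$, a factor $h^{-1/8}$ worse than the target $h^{-1/4}\sigma^{-1/12}$. Your proposed fix via ``weighted H\"older on $|v|^{p-2}$'' cannot help: any H\"older/interpolation combination of the $L^{2}$ bilinear bound with linear Sogge bounds produces exponents that are convex combinations of the input exponents, hence lie on or above the chord, never below it.

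The paper's proof is not an endpoint argument. It runs $TT^{\star}$ directly on the product operator $W_{h,\sigma}(t):(f,g)\mapsto U_{h}(t)f\cdot\widetilde U_{\sigma}(t)g$; the kernel of $W_{h,\sigma}(t)W_{h,\sigma}^{\star}(s)$ factors as $K_{h}\cdot K_{\sigma}$ and is analysed in three regimes $|t-s|\lesssim\sigma$, $\sigma\lesssim|t-s|\lesssim h$, $|t-s|\gtrsim h$. In the last regime both factors carry nondegenerate stationary points, and the hypothesis $\sigma<ch$ is used to make the $\sigma^{-1}$-oscillation dominate in the $L^{2}\to L^{2}$ kernel bound (this is where the frequency gap actually enters---not through any transversality of the two characteristic surfaces, which in Case~1 are both graphs over $\xi'$). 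Interpolating the resulting $L^{1}\to L^{\infty}$ and $L^{2}\to L^{2}$ kernel estimates and then integrating in $|t-s|$ yields $G_{n,p}$ for each $p$ individually; the break at $p=3$ when $n=2$ appears precisely because the regime-three factor $|t-s|^{-(n-1)+(n-1)/p}$ has the critical decay $|t-s|^{-2/p}$ at $p=\tfrac{n+1}{n-1}$, forcing Hardy--Littlewood--Sobolev in place of Young. That mechanism is invisible to endpoint interpolation of linear bounds.
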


\begin{proof}
We adopt the notation $t=x_{1}$ and write $x\in\M$ as $x=(t,x')$. Let
$$E_{h}[u]=(hD_{x_{1}}-a_{1}(t,x',hD_{x'}))u(h),$$
and
$$E_{\sigma}[v]=(\sigma{}D_{x_{1}}-a_{1}(t,x',\sigma{}D_{x'}))v(\sigma).$$
Using Duhamel's principle, we have
$$u(t,x')=U_{h}(t,0)u(0,x')+\frac{1}{h}\int_{0}^{t}U_{h}(t-s,s)E_{h}[u]\,ds,$$
and
$$v(t,x')=\widetilde{U}_{\sigma}(t,0)v(0,x')+\frac{1}{\sigma}\int_{0}^{t}\widetilde{U}_{\sigma}(t-s,s)E_{\sigma}[v]\,ds,$$
where
$$\begin{cases}
(hD_{t}-a_{1}(s+t,x',hD_{x'}))U_{h}(t,s)=0,\\
U_{h}(0,s)=\Id,\end{cases}$$
and
$$\begin{cases}
(\sigma{}D_{t}-a_{2}(s+t,x',\sigma{}D_{x'})\widetilde{U}_{\sigma}(t,s)=0,\\
\widetilde{U}_{\sigma}(0,s)=\Id.\end{cases}$$
For simplicity of notation, we write $U_h(t):=U_h(t,0), \widetilde U_\sigma(t):=\widetilde U_\sigma(t,0)$. We may then write $uv$ as 
\begin{multline*}
u(t,x')v(t,x')=U_{h}(t)u(0,x')\widetilde U_{\sigma}(t)v(0,x')+\frac{1}{\sigma}U_{h}(t)u(0,x')\int_{0}^{t}\widetilde{U}_{\sigma}(t-s,s){}E_{\sigma}[v]\,ds+\\
\frac{1}{h}\widetilde{U}_{\sigma}(t)v(0,x')\int_{0}^{t}U_h(t-s,s)E_{h}[u]\,ds+\frac{1}{h\sigma}\int_{0}^{t}\int_{0}^{t}U_{h}(t-s_1,s_{1})E_{h}[u]\widetilde{U}_{\sigma}(t-s_2,s_{2})E_{\sigma}[v]\,ds_{1}ds_{2}.
\end{multline*}
Since $t$ is localised and 
$$\norm{E_\sigma[v]}\lesssim\sigma,\quad\norm{E_h[v]}_{L^{2}}\lesssim{}h,$$
it suffices to prove that
\begin{align}\label{eq:bilinear22}
\norm{U_{h}(t)f(x')\widetilde{U}_{\sigma}(t)g(x')}_{L^{p}_{t,x'}}\lesssim{}G_{n,p}(h,\sigma)\norm{f}_{L^{2}_{x'}}\norm{g}_{L^{2}_{x'}}.
\end{align}
Using the parametrix construction in Section \ref{sec:evo}, we have
$$U_{h}(t)f(x')=\frac{1}{h^{n-1}}\int{}e^{\frac{i}{h}(\phi_{1}(t,x',\xi')-\langle{}y',\xi'\rangle)}b_{1}(t,x',\xi')f(y')\,dy'd\xi',$$
and
$$\widetilde{U}_{\sigma}(t)g(x')=\frac{1}{\sigma^{n-1}}\int{}e^{\frac{i}{\sigma}(\phi_{2}(t,x',\eta')-\langle{}w',\eta'\rangle)}b_{2}(t,x',\eta')g(w')\,dw'd\eta',$$
where
\begin{equation}\label{eq:phih}
\partial_{t}\phi_{1}(t,x',\xi')-a_{1}(t,x',\nabla_{x'}\phi_{1})=0,\quad{}\phi_{1}(0,x',\xi')=\langle{}x',\xi'\rangle,
\end{equation}
and
\begin{equation}\label{eq:phisigma}
\partial_{t}\phi_{2}(t,x',\eta')-a_{2}(t,x',\nabla_{x'}\phi_{2})=0,\quad{}\phi_{2}(0,x',\eta')=\langle{}x',\eta'\rangle.
\end{equation}
Therefore, we need to study the $L^{2}(\R^{2n-2})\to{}L^{p}(\R^{n})$ mapping properties of the operator $W_{h,\sigma}(t)$ given by
$$W_{h,\sigma}(t)=\frac{1}{(h\sigma)^{n-1}}\int{}e^{i\big[\frac{1}{h}\phi_{1}(t,x',\xi')-\frac{1}{h}\langle{}y',\xi'\rangle+\frac{1}{\sigma}\phi_{2}(t,x',\eta')-\frac{1}{\sigma}\langle{}w',\eta\rangle\big]}b(t,x',\xi',\eta')q(y',w')\,d\eta'{}d\xi'{}dy'dw',$$
where
$$b(t,x',\xi',\eta')=b_{1}(t,x',\xi')b_{2}(t,x',\eta').$$
We will view this as a Strichartz type estimate with equal weighting on space and time. For unitary operators $V(t)$, such estimates depend only on the dispersive bounds
$$\norm{V(t)V^{\star}(s)}_{L^{1}\to{}L^{\infty}}\lesssim{}|t-s|^{-\kappa}.$$
See for example the abstract framework given in Keel-Tao \cite{KT}. Our operator $W_{h,\sigma}(t)$ is not unitary but we may still use this framework as in Tacy \cite{T} by proving $L^{2}\to{}L^{2}$ estimates to replace unitarity. We prove
$$\norm{W_{h,\sigma}(t)W_{h,\sigma}^{\star}(s)}_{L^{1}_{x'}\to{}L^{\infty}_{x'}}\lesssim{}h^{-\alpha_{\infty}}\sigma^{-\beta_{\infty}}(h+|t-s|)^{-\gamma_{\infty}}(\sigma+|t-s|)^{-\kappa_{\infty}},$$
and
$$\norm{W_{h,\sigma}(t)W_{h,\sigma}^{\star}(s)}_{L^{2}_{x'}\to{}L^{2}_{x'}}\lesssim{}h^{-\alpha_{2}}\sigma^{-\beta_{2}}(h+|t-s|)^{-\gamma_{2}}(\sigma+|t-s|)^{-\kappa_{2}}.$$
Then by interpolating between then obtain
$$\norm{W_{h,\sigma}(t)W_{h,\sigma}^{\star}(s)}_{L^{p'}_{x'}\to{}L^{p}_{x'}}\lesssim{}h^{-\alpha_{p}}\sigma^{-\beta_{p}}(h+|t-s|)^{-\gamma_{p}}(\sigma+|t-s|)^{-\kappa_{p}}.$$
As in the Keel-Tao \cite{KT} framework and in earlier work by Mockenhaupt-Seeger-Sogge \cite{MSS}, and Sogge \cite{So1}, we use Young and Hardy-Littlewood-Sobolev inequalities to resolve the $|t-s|$ integral. 
Therefore, we need to calculate $W_{h,\sigma}(t)W_{h,\sigma}^{\star}(s)$, where
$$W_{h,\sigma}(t)W_{h,\sigma}^{\star}(s)f(x')=\int{}W(t,x',s,z')f(z')dz'.$$
Here,
\begin{multline}\label{Wkernel}
W(t,x',s,z')=\frac{1}{(h\sigma)^{2(n-1)}}\int{}e^{i\psi_{h,\sigma}(t,s,x',y',w',z',\xi'_{1},\xi'_{2},\eta'_{1},\eta'_{2})}b(t,s,x',z',\xi'_{1},\xi'_{2},\eta'_{1},\eta'_{2})\,d\Lambda\\
d\Lambda=dy'dw'd\xi_{1}'d\xi_{2}'{}d\eta_{1}'d\eta_{2}',\end{multline}
where
\begin{eqnarray*}
\psi_{h,\sigma}(t,s,x',y',w',z',\xi'_{1},\xi'_{2},\eta'_{1},\eta'_{2})&=&\frac{1}{h}\Big[\phi_{1}(t,x',\xi_{1}')-\phi_{1}(s,z',\xi'_{2})-\langle{}y',\xi'_{1}-\xi_{2}'\rangle\Big]\\
&&+\frac{1}{\sigma}\Big[\phi_{2}(t,x',\eta_{1}')-\phi_{1}(s,z',\eta'_{2})-\langle{}y',\eta'_{1}-\eta'_{2}\rangle\Big].
\end{eqnarray*}
We use stationary phase asymptotics in Zworski \cite[Theorem 3.16]{Zw} to calculate the $(y',\xi'_{1},w',\eta'_{1})$ integral in \eqref{Wkernel}. As the stationary point is always non-degenerate, we obtain
\begin{eqnarray*}
W(t,x',s,z')&=&\frac{1}{h^{n-1}}\int e^{\frac{i}{h}[\phi_{1}(t,x',\xi')-\phi_{1}(s,z',\xi')]}\tilde b_1(t,s,x',z',\xi')\,d\xi'\\
&&\times\frac{1}{\sigma^{n-1}}\int e^{\frac{i}{\sigma}[\phi_{2}(t,x',\eta')-\phi_{2}(s,z',\eta')]}\tilde b_2(t,s,x',z',\eta')\,d\eta'\\
&:=&K_h (t,x',s,z')\cdot K_\sigma(t,x',s,z').
\end{eqnarray*}
We analyse $K_{h}$ and $K_{\sigma}$ by considering critical points of 
$$\phi_{1}(t,x',\xi')-\phi_{1}(s,z',\xi')\quad\mbox{in $\xi'$}$$
and
$$\phi_{2}(t,x',\eta')-\phi_{2}(s,z',\eta')\quad\mbox{in $\eta'$}.$$
To that end, following Keel-Tao \cite{KTZ}, we study the oscillatory integral
\begin{equation}\label{eq:Kh}
K_h(t,x',s,z')=\frac{1}{h^{n-1}}\int e^{\frac{i}{h}\psi_1(t,s,x',z',\xi')}\tilde b_1(t,s,x',z',\xi')\,d\xi',
\end{equation}
where
$$\psi_1(t,s,x',z',\xi')=\phi_{1}(t,x',\xi')-\phi_{1}(s,z',\xi').$$
The critical point of $\psi_1$ solves
\begin{equation}\label{stationaryh}
\nabla_{\xi'}\psi_1(t,s,x',z',\xi')=\nabla_{\xi'}\Big[\phi_{1}(t,x',\xi')-\phi_{1}(s,z',\xi')\Big]=0.
\end{equation}
From \eqref{eq:phih}, we have that
$$\phi_{1}(t,x',\xi')-\phi_{1}(s,z',\xi')=\langle{}x'-z',\xi'+O(|s|)\rangle+(t-s)(a_{1}(s,z',\xi')+O(|t-s|)).$$ 
Then \eqref{stationaryh} implies that
$$0=(1+O(|s|))\cdot{}(x'-z')+(t-s)(\partial_{\xi'}a_{1}+O(|t-s|)),$$
and thus for critical points to occur we must have
$$|x'-z'|=O(|t-s|).$$
If $|x'-z'|\gg|t-s|$, using integration by parts to \eqref{eq:Kh} we get for any $N\in\N$
$$|K_h(t,x',s,z')|\le\frac{1}{h^{n-1}}\left(1+\frac{|x'-z'|}{h}\right)^{-N}.$$
If $|x'-z'|\lesssim|t-s|$, there might be critical points. The Hessian is given by
\begin{align*}
\partial^{2}_{\xi'\xi'}\psi_1(t,s,x',z',\xi')=&(t-s)(\partial^{2}_{\xi'\xi'}a_{1}+O(|t-s|))+O(|x'-z'|s)\\
=&(t-s)(\partial^{2}_{\xi'\xi'}a_{1}+O(|t|+|s|)).
\end{align*}
Thus by the non-degenerate condition there exists at most one critical point. We know that $\partial_{\xi'\xi'}^{2}a_{1}$ is a non-degenerate matrix, so
\begin{equation}\label{eq:hHessian}
\left|\det\left(\partial^{2}_{\xi'\xi'}\psi_1\right)\right|\geq{}c|t-s|^{n-1}.
\end{equation}
A similar calculation holds for 
\begin{equation}\label{eq:Ksigma}
K_\sigma(t,x',s,z')=\frac{1}{\sigma^{n-1}}\int e^{\frac{i}{\sigma}\psi_2(t,s,x',z',\eta')}\tilde b_2(t,s,x',z',\eta')\,d\eta',
\end{equation}
where
$$\psi_2(t,s,x',z',\eta')=\phi_{2}(t,x',\eta')-\phi_{2}(s,z',\eta').$$
If $|x'-z'|\gg|t-s|$, then
$$|K_\sigma(t,x',s,z')|\le\frac{1}{\sigma^{n-1}}\left(1+\frac{|x'-z'|}{\sigma}\right)^{-N}.$$
If $|x'-z'|\lesssim|t-s|$, there exists at most one critical point that satisfies
\begin{equation}\label{stationarysigma}
\nabla_{\eta'}\psi_2(t,s,x',z',\eta')=\nabla_{\eta'}\Big[\phi_{2}(t,x',\eta')-\phi_{2}(s,z',\eta')\Big]=0.
\end{equation}
The Hessian satisfies 
\begin{equation}\label{eq:sigmaHessian}
\left|\det\left(\partial^{2}_{\eta'\eta'}\psi_2\right)\right|\geq{}c|t-s|^{n-1}.
\end{equation}
We split the kernel into three parts
\begin{itemize}
\item Regime 1. $|t-s|\leq{}C\sigma$,
\item Regime 2. $C\sigma\leq{}|t-s|\leq{}Ch$,
\item Regime 3. $Ch\leq{}|t-s|$,
\end{itemize}
for some $C$ independent of $t,s,h,\sigma$. We write
$$W(t,x',s,z')=W_{1}(t,x',s,z')+W_{2}(t,x',s,z')+W_{3}(t,x',s,z'),$$
where $\chi\in C^\infty_0([-1,1])$ and 
$$W_{1}(t,x',s,z')=\chi(\sigma^{-1}|t-s|)W(t,x',s,z'),$$
$$W_{2}(t,x',s,z')=(1-\chi(\sigma^{-1}|t-s|))\chi(h^{-1}|t-s|)W(t,x',s,z'),$$
$$W_{3}(t,x',s,z')=(1-\chi(\sigma^{-1}|t-s|)-(1-\chi(\sigma^{-1}|t-s|))\chi(h^{-1}|t-s|))W(t,x',s,z').$$
In the following computation, we slightly abuse the notation and use $W_i(t,s)$ as the operator with integral kernel $W_{i}(t,x',s,z')$, $i=1,2,3$.

\subsubsection{Regime 1}
We have that $|t-s|\le C\sigma$ in the support of $W_1$. In this case, note that as there cannot be a critical point in either $\xi'$ or $\eta'$ if $|x'-z'|>C|t-s|$, we can integrate by parts in $\eta'$ to obtain
$$|W_{1}(t,x',s,z')|\leq{}h^{-(n-1)}\sigma^{-(n-1)}\left(1+\frac{|x'-z'|}{\sigma}\right)^{-N}$$
for any natural number $N$. Therefore, applying Young's inequality (see e.g. Sogge \cite[Corollary 2.1.2]{So3}) we obtain that if $W_{1}(t,s)$ is the operator associated with the integral kernel $W_{1}(t,x',s,z')$, then
\begin{equation}\label{eq:W1bound}
\norm{W_{1}(t,s)}_{L^{p'}_{x'}\to{}L^{p}_{x'}}\lesssim{}h^{-(n-1)}\sigma^{-(n-1)+\frac{2(n-1)}{p}}.
\end{equation}

\subsubsection{Regime 2}
We have that $C\sigma\le|t-s|\le Ch$ in the support of $W_2$. If $|x'-z'|\gg|t-s|$ or $K_\sigma$ has no critical point, then we immediately get the same bound \eqref{eq:W1bound} and hence we are done. Now we assume $K_\sigma$ has a unique critical point $\eta'_c$.  By the stationary phase theorem in H\"ormander \cite[Theorem 7.7.5]{Ho}, using \eqref{eq:Ksigma} and \eqref{eq:sigmaHessian}, we have
$$W_2(t,s,x',z')=K_h(t,s,x',z')\cdot\sigma^{-\frac{n-1}{2}}|t-s|^{-\frac{n-1}{2}}e^{\frac{i}{\sigma}\theta(t,s,x',z')}B (t,s,x',z'),$$
where $B(t,s,x',z')$ is bounded and compactly supported and
$$\theta(t,s,x',z')=\psi_2(t,s,x',z',\eta_c')=\phi_{2}(t,x',\eta_c')-\phi_{2}(s,z',\eta_c').$$
Thus we immediately get
\begin{equation}\label{eq:2ndL1Linfty}
\norm{W_{2}(t,s)}_{L^1_{x'}\to L^{\infty}_{x'}}\lesssim{}h^{-(n-1)}\sigma^{-\frac{n-1}{2}}|t-s|^{-\frac{n-1}{2}}.
\end{equation}
To get the $L^{2}_{x'}\to{}L^{2}_{x'}$ estimates we write
$$W_{2}(t,s,x',z')=\int{}W_{2}(t,s,x',z',\xi')\,d\xi',$$
where
$$W_{2}(t,s,x',z',\xi')=h^{-(n-1)}\sigma^{-\frac{n-1}{2}}|t-s|^{-\frac{n-1}{2}}e^{\frac{i}{\sigma}\theta(t,s,x',z')+\frac{1}{h}(\phi_{1}(t,x',\xi')-\phi_{1}(s,z',\xi'))}b(t,x',\xi)B(t,s,x',z').$$
Due to the compact support in $\xi'$ it is enough to estimate the $L^{2}\to{}L^{2}$ norm of $W_{2}(t,s,\xi')$, the operator with kernel $W_{2}(t,s,x',z',\xi')$. 

We will do this by writing
$$\norm{W_{2}(t,s,\xi')f}_{L^{2}_{x'}}^{2}=\int{}\widetilde{W}_{2}(t,s,y',z',\xi')f(z')\overline{f(y')}\,dy'dz',$$
where
\begin{equation}\label{eq:tildeW2}
\widetilde{W}_{2}(t,s,y',z',\xi')=\int{}W_{2}(t,x',s,y',\xi')\overline{W_2(t,x',s,z',\xi')}\,dx'.
\end{equation}
and  estimating $|\widetilde{W}_{2}(t,s,y',z')|$ by integrating by parts in $x'$. To do this we need to know how $B(t,s,x',z')$ behaves under differentiation in $x'$.

If we differentiate \eqref{stationarysigma} in $x'$ gives
$$0=\Id+O(|t|+|s|)+(t-s)\left([\partial_{x'}\eta_{c}]^{T}\partial^{2}_{\eta'\eta'}\phi_2+O(|t-s|)\right).$$
Hence,
\begin{equation}\label{etaderiv}
\partial_{x'}\eta'_{c}(t,s,x',z')=\frac{-1}{t-s}(\Id+O(|t|+|s|))(\partial^{2}_{\eta'\eta'}\phi_2)^{-1},
\end{equation}
and thus,
$$|\partial_{x_{i}}\eta'_c(t,s,x',z')|\leq{}\frac{C}{|t-s|}.$$
We can repeat the process to get
$$|D_{x'}^{\gamma}\eta_c'(t,s,x',z')|\leq{}\left(\frac{C}{|t-s|}\right)^{|\gamma|}$$
for any multi-index $\gamma$. Furthermore, we know from Tacy \cite[Lemma 4.1]{T} that
\begin{equation}\label{eq:dxB}
|D_{x'}^{\gamma}B(t,s,x',z')|\leq{}|D_{x'}^{\gamma}\eta'_{c}(t,s,x',z')|\leq{}\left(\frac{C}{|t-s|}\right)^{|\gamma|}.
\end{equation}
We may now proceed in integrating \eqref{eq:tildeW2} by parts in $x'$.

The phase function is
$$\tilde{\theta}_{\sigma,h}(t,s,x',y',z')=\frac{1}{\sigma}\big[\theta(t,s,x',y')-\theta(t,s,x',z')\big]+\frac{1}{h}\left[\phi_{1}(s,y',\xi')-\phi_{1}(s,z',\xi')\right].$$
Note that the term
$$\frac{1}{h}\left[\phi_{1}(s,y',\xi')-\phi_{1}(s,z',\xi')\right].$$
does not depend on $x'$.  Expanding with a Taylor series about $y'=z'$ we obtain
$$\theta(t,s,x',y')-\theta(t,s,x',z')=\frac{1}{\sigma}\Big[\nabla_{z'}\theta(t,s,x',z')\cdot{}(y'-z')+O(|y'-z'|^{2})\Big].$$
So
$$\nabla_{x'}\tilde{\theta}_{\sigma,h}(t,s,x',y',z')=\frac{1}{\sigma}\Big[\partial^{2}_{x'z'}\theta(t,s,x',z')\cdot{}(y'-z')+O(|y'-z'|^{2})\Big].$$
Therefore, we need an expression for $\partial^{2}_{x'z'}\theta$. We have that
$$\theta(t,s,x',z')=\phi_{2}(t,x',\eta'_{c})-\phi_{2}(t,z',\eta'_{c}),$$
and
\begin{equation}\label{criticalequation}
0\equiv{}\partial_{\eta'}(\phi_{2}(t,x',\eta'_{c})-\phi_{2}(t,x',\eta'_{c})).
\end{equation}
Differentiating \eqref{criticalequation} in $z'$ and $x'$ we obtain
$$\frac{\partial^{2}\theta}{\partial{}x_{i}\partial{}z_{j}}=\sum_{k,l=2}^{n}\frac{\partial^{2}\phi_{2}}{\partial\eta_{l}\partial\eta_{k}}\frac{\partial\eta_c'}{\partial{}x_{i}}\frac{\partial\eta_c'}{\partial{}z_{j}},$$
which in matrix form is
\begin{equation}\frac{\partial^{2}\theta}{\partial{}x'\partial{}z'}=[\partial_{x'}\eta_c']^{T}\partial^{2}_{\eta'\eta'}\phi_2[\partial_{z'}\eta_c'].\label{2ndmatrix}\end{equation}
We already have an expression for $\partial_{x'}\eta_c'(t,s,x',z')$ from \eqref{etaderiv}. A similar process gives us
$$\partial_{z'}\eta_c'(t,s,x',z')=\frac{-1}{t-s}(\Id+O(|t|+|s|))(\partial^{2}_{\eta'\eta'}\phi_2)^{-1}.$$
Combining this with \eqref{2ndmatrix}, we have
\begin{equation}\label{eq:thetapartialpartial}
\frac{\partial^{2}\theta}{\partial{}x'\partial{}z'}=\frac{-1}{t-s}(\partial_{\eta'\eta'}^{2}\phi_2^{-1}+O(|t-s|)).
\end{equation}
Therefore,
$$|\nabla_{x'}\tilde\theta_{\sigma}(t,s,x',y',z')|\geq{}\frac{c|y'-z'|}{\sigma|t-s|}.$$
So each integration by parts of \eqref{eq:tildeW2} with respect to $x'$ gains a factor of 
$$\frac{\sigma|t-s|}{|y'-z'|},$$ 
but, in view of \eqref{eq:dxB}, looses a factor of $|t-s|^{-1}$ from hitting the symbol. Overall each integration by parts gains a factor of
$$\frac{\sigma}{|y'-z'|}.$$
Therefore, we obtain
\begin{eqnarray*}
&&|\widetilde{W}_{2}(t,s,y',z',\xi')|\\
&\lesssim&\!\!\!\!{}h^{-2(n-1)}\sigma^{-(n-1)}|t-s|^{-(n-1)}\!\left(1+\frac{|y'-z'|}{\sigma}\right)^{-N}\!\!\!\!\!\int{}\chi\left(\frac{|y'-x'|}{|t-s|}\right)\chi\left(\frac{|z'-x'|}{|t-s|}\right)\,dx'\\
&\lesssim&\!\!\!\!{}h^{-2(n-1)}\sigma^{-(n-1)}\left(1+\frac{|y'-z'|}{\sigma}\right)^{-N}.
\end{eqnarray*}
So by H\"older and Young inequalities,
$$\norm{W_{2}(t,s,\xi')f}_{L^{2}_{x'}}^{2}\lesssim{}h^{-2(n-1)}\norm{f}_{L^{2}_{x'}}^{2},$$
which implies
$$\norm{W_{2}(t,s,\xi')}_{L^{2}_{x'}\to L^2_{x'}}\lesssim{}h^{-(n-1)}$$
and
\begin{equation}\label{eq:2ndL2L2}
\norm{W_{2}(t,s)}_{L^{2}_{x'}\to L^2_{x'}}\lesssim{}h^{-(n-1)}.
\end{equation}

\subsubsection{Regime 3}
We have that $|t-s|\ge Ch$ in the support of $W_3$. If $|x'-z'|\gg|t-s|$ or $K_\sigma$ has no critical point, then $W_{3}$ has the same bound as $W_{1}$ given by \eqref{eq:W1bound} and we are done. Now we assume $|x'-z'|\les|t-s|$ and $K_\sigma$ has a critical point $\eta'_c$. If $K_h$ has no critical point, then$W_{3}$ has the same bounds as $W_{2}$ given by \eqref{eq:2ndL1Linfty} and \eqref{eq:2ndL2L2} .  So we may also assume $K_h$ also has a critical point, denoted as $\xi'_c$. Therefore, we obtain
$$W_{3}(t,x',s,z')=\frac{1}{(h\sigma)^{\frac{n-1}{2}}}|t-s|^{-\frac{n-1}{2}}|t-s|^{-\frac{n-1}{2}}e^{i\left(\frac{1}{h}\theta_{1}(t,s,x',z')+\frac{1}{\sigma}\theta_{2}(t,s,x',z')\right)}B(t,s,x',z'),$$
where 
$$\theta_{1}(t,s,x',z')=\phi_{1}(t,x',\xi'_{c})-\phi_{1}(t,x',\xi'_{c}),$$
and
$$\theta_{2}(t,s,x',z')=\phi_{2}(t,x',\eta'_{c})-\phi_{2}(t,x',\eta'_{c})$$
for $\xi'_{c}$ and $\eta_{c}'$ the solutions to \eqref{stationaryh} and \eqref{stationarysigma}, respectively. Further we still have
$$|D_{x'z'}^{\gamma}B(t,s,x',z')|\lesssim|t-s|^{-|\gamma|}$$
for any multi-index $\gamma$ thanks to \eqref{eq:dxB}. From this representation, we can directly read off the $L^{1}_{x'}\to{}L^{\infty}_{x'}$ estimates, that is,
\begin{equation}\label{eq:3rdL1Linfty}
\norm{W_{3}(t,s)}_{L^{1}_{x'}\to{}L^{\infty}_{x'}}\lesssim{}h^{-\frac{n-1}{2}}\sigma^{-\frac{n-1}{2}}|t-s|^{-(n-1)}.
\end{equation}
We need only then calculate the $L^{2}_{x'}\to{}L^{2}_{x'}$ norm. As in Regime 2, we have
$$\norm{W_{3}(t,s)f}^{2}_{L^{2}_{x'}}=\int{}\widetilde{W}_{3}(t,s,y',z')f(y')\overline{f(z')}\,dy'dz'.$$
Here,
\begin{eqnarray*}
\widetilde{W}(t,s,z',y)&&\\
&\!\!=\!\!&\!\!\!\!\int W(t,x',s,y')\overline{W(t,x',s,z')}\,dx'\\
&\!\!=\!\!&\!\!\!{}h^{-(n-1)}\sigma^{-(n-1)}|t-s|^{-2(n-1)}\!\!\!\int{}e^{i\left(\frac{1}{h}\tilde{\theta}_{1}(t,s,x',y',z')+\frac{1}{\sigma}\tilde{\theta}_{2}(t,s,x',y',z')\right)}B(t,s,x',z')\overline{B(t,s,x',y')}\,dx',
\end{eqnarray*}
in which
$$\tilde{\theta}_{1}(t,s,x',y',z')=\theta_{1}(t,s,x',y')-\theta_{1}(t,s,x',z'),$$
and
$$\tilde{\theta}_{2}(t,s,x',y',z')=\theta_{2}(t,s,x',y')-\theta_{2}(t,s,x',z').$$
Note that as $\sigma$ is much smaller than $h$ the major oscillatory terms come from this parameter. 
We need to calculate
$$\nabla_{x'}(\tilde{\theta}_{1}(t,s,x',y',z')+\tilde{\theta}_{2}(t,s,x',y',z'))$$
Again we expand around the point $y'=z'$ and obtain
$$\nabla_{x'}(\tilde{\theta}_{1}(t,s,x',y',z')+\tilde{\theta}_{2}(t,s,x',y',z'))=\left(\frac{1}{h}\partial^{2}_{x'z'}\theta_{1}+\frac{1}{\sigma}\partial^{2}_{x'z'}\theta_{2}\right)\cdot(y'-z')+O(|y'-z'|^{2}).$$
We already know from \eqref{eq:thetapartialpartial} that
$$\partial^{2}_{x'z'}\theta_1=\frac{-1}{t-s}(\partial_{\xi'\xi'}^{2}\phi_1^{-1}+O(|t-s|)),\quad\text{and}\quad\partial^{2}_{x'z'}\theta_2=\frac{-1}{t-s}(\partial_{\eta'\eta'}^{2}\phi_2^{-1}+O(|t-s|)).$$
Since $h>C\sigma$, we have
$$\left|\nabla_{x'}(\tilde{\theta_{1}}+\tilde{\theta}_{2})\right|\geq{}\frac{c|y'-z'|}{\sigma|t-s|}.$$
So we may proceed in the same fashion as in the second regime to obtain
$$|\widetilde{W}_{3}(t,s,y',z')|\lesssim{}h^{-(n-1)}\sigma^{-(n-1)}|t-s|^{-2(n-1)}\left(1+\frac{|y'-z'|}{\sigma}\right)^{-N}\int{}\chi\left(\frac{|y'-x'|}{|t-s|}\right)\chi\left(\frac{|z'-x'|}{|t-s|}\right)\,dx'\\.$$
This implies that
$$\norm{W_{3}(t,s)f}_{L^{2}_{x'}}^{2}\lesssim{}h^{-(n-1)}|t-s|^{-(n-1)}\norm{f}_{L^{2}_{x'}}^{2},$$
and
\begin{equation}\label{eq:3rdL2L2}
\norm{W_{3}(t,s)}_{L^{2}_{x'}\to L^{2}_{x'}}\lesssim{}h^{-\frac{n-1}{2}}|t-s|^{-\frac{n-1}{2}}.
\end{equation}

Recall that we needed to prove 
$$\left\|(W_{h,\sigma}(t)q)(x')\right\|_{L^p_{t,x'}}\lesssim G(h,\sigma)\|q(y',w')\|_{L^2_{y',w'}}.$$
By $TT^\star$ argument it suffices to show
$$\norm{\int W_i(t,s)f(s)\,ds}_{L^p_{t,x'}}\le G(h,\sigma)^2\norm{f}_{L^{p'}_{t,x'}},\quad i=1,2,3.$$

\noindent\textbf{Regime 1: $|t-s|\leq{}C\sigma$.}
We get from \eqref{eq:W1bound} and H\"older's inequality that
\begin{eqnarray*}
\left\|\int W_1(t,s)f(s)\,ds\right\|_{L^p_{t,x'}}&\le&\left\|\int_{|t-s|\le C\sigma}h^{-(n-1)}\sigma^{-(n-1)+\frac{2(n-1)}{p}}\norm{f(s,\cdot)}_{L^{p'}_{x'}}\,ds\right\|_{L_t^p}\\
&\le&h^{-(n-1)}\sigma^{-(n-1)+\frac{2n}{p}}\norm{f}_{L^{p'}_{t,x'}}.
\end{eqnarray*}

\noindent\textbf{Regime 2: $C\sigma\leq{}|t-s|\leq{}Ch$.}
Interpolation between \eqref{eq:2ndL1Linfty} and \eqref{eq:2ndL2L2} gives that
$$\norm{W_{2}(t,s)}_{L^{p'}_{x'}\to L^{p}_{x'}}\lesssim{}h^{-(n-1)}\sigma^{-\frac{n-1}{2}+\frac{n-1}{p}}|t-s|^{-\frac{n-1}{2}+\frac{n-1}{p}}.$$
Hence, from H\"older's inequality, we have that
\begin{eqnarray*}
\norm{\int W_2(t,s)f(s)ds}_{L^p_{t,x'}}&\les& \norm{\int_{\sigma\les |t-s|\les h} h^{-(n-1)}\sigma^{-\frac{n-1}{2}+\frac{n-1}{p}} |t-s|^{-\frac{n-1}{2}+\frac{n-1}{p}}\norm{f(s,\cdot)}_{L^{p'}_{x'}}ds}_{L_t^p}\\
&\les&h^{-(n-1)}\sigma^{-(n-1)+\frac{2n}{p}}\norm{f}_{L^{p'}_{t,x'}}
\end{eqnarray*}
when 
$$p\ne\frac{2(n+1)}{n-1}.$$
At $p=\frac{2(n+1)}{n-1}$, we use Hardy-Littlewood-Sobolev inequality in dimension one to get
\begin{eqnarray*}
\norm{\int W_2(t,s)f(s)ds}_{L^p_{t,x'}}&\les& \norm{\int_{\sigma\les |t-s|\les h} h^{-(n-1)}\sigma^{-\frac{n-1}{2}+\frac{n-1}{p}} |t-s|^{-\frac{n-1}{2}+\frac{n-1}{p}}\norm{f(s,\cdot)}_{L^{p'}_{x'}}ds}_{L_t^p}\\
&\les&h^{-(n-1)}\sigma^{-\frac{n-1}{2}+\frac{n-1}{p}}\norm{f}_{L^{p'}_{t,x'}}.
\end{eqnarray*}

\noindent\textbf{Regime 3: $|t-s|\ge{}Ch$.}
Interpolation between \eqref{eq:3rdL1Linfty} and \eqref{eq:3rdL2L2} gives that
$$\norm{W_{3}(t,s)}_{L^{p'}_{x'}\to L^{p}_{x'}}\lesssim{}h^{-\frac{n-1}{2}}\sigma^{-\frac{n-1}{2}+\frac{n-1}{p}} |t-s|^{-(n-1)+\frac{n-1}{p}}.$$
Hence, from H\"older's inequality, we have that
\begin{align*}
\norm{\int W_3(t,s)f(s)ds}_{L^p_{t,x'}}\les& \norm{\int_{|t-s|\ges h} h^{-\frac{n-1}{2}}\sigma^{-\frac{n-1}{2}+\frac{n-1}{p}} |t-s|^{-(n-1)+\frac{n-1}{p}}\norm{f(s,\cdot)}_{L^{p'}_{x'}}ds}_{L_t^p}\\
\les&h^{-\frac{3(n-1)}{2}+\frac{n+1}{p}}\sigma^{-\frac{n-1}{2}+\frac{n-1}{p}}\norm{f}_{L^{p'}_{t,x'}}
\end{align*}
when $p\ne(n+1)/(n-1)$. But $(n+1)/(n-1)<2$ unless $n=2,3$. So we only need to consider the cases $(n,p)=(2,3)$ and $(n,p)=(3,2)$.

For $(n,p)=(2,3)$, by Hardy-Littlewood-Sobolev inequality in dimension one we get
\begin{align*}
\norm{\int W_3(t,s)f(s)ds}_{L^3_{t,x'}}\les& \norm{\int_{|t-s|\ges h} h^{-\frac{1}{2}}\sigma^{-\frac{1}{6}} |t-s|^{-2/3}\norm{f(s,\cdot)}_{L^{3/2}_{x'}}ds}_{L_t^3}\\
\les& h^{-\frac{1}{2}}\sigma^{-\frac{1}{6}}\norm{f}_{L^{3/2}_{t,x'}}.
\end{align*}
For $(n,p)=(3,2)$, we can not use Hardy-Littlewood-Sobolev inequality as in the above case. However, a similar argument as in Tacy \cite[Proposition 5.1]{T} gives the following estimate with a $\log$ loss.
\begin{align*}
\norm{\int W_3(t,s)f(s)ds}_{L^2_{t,x'}}\les& h^{-1}|\log h|\norm{f}_{L^{2}_{t,x'}}.
\end{align*}
We complete the proof by gathering the above calculations to obtain
$$\norm{\int W_i(t,s)f(s)\,ds}_{L^p_{t,x'}}\le G(h,\sigma)^2\norm{f}_{L^{p'}_{t,x'}},\quad i=1,2,3.$$
\end{proof}

\begin{prop}[Case 1b]\label{dsfs}
Assume the notations in Case 1:
$$\norm{(hD_{x_{1}}-a_{1}(x,hD_{x'}))u(h)}_{L^{2}}\lesssim{}h\norm{u(h)}_{L^{2}},$$
and
$$\norm{(\sigma{}D_{x_{1}}-a_{2}(x,\sigma{}D_{x'}))v(\sigma)}_{L^{2}}\lesssim{}\sigma\norm{v(\sigma)}_{L^{2}},$$
where $\partial^2_{\xi'\xi'}a_1$ and $\partial_{\xi'\xi'}^2a_2$ are positive definite. Suppose that $Ch\leq{}\sigma\leq{}h$, where $C$ is as in Proposition \ref{dsdf}. Then
$$\norm{uv}_{L^{p}}\lesssim{}F_{n,p}(h)\norm{u}_{L^{2}}\norm{v}_{L^{2}},$$
where for $n=2$,
$$F_{2,p}(h)=\begin{cases}
h^{-\frac{1}{2}+\frac{1}{2p}} & \text{for }2\leq{}p\leq{}3,\\
h^{-1+\frac{2}{p}} & \text{for }3\leq{}p\leq{}\infty;
\end{cases}$$
and for $n\geq{}3$, $(n,p)\neq{}(3,2)$,
$$F_{n,p}(h)=h^{-n+1+\frac{n}{p}};$$
and
$$F_{3,2}(h)=h^{-\frac{1}{2}}|\log h|^{\frac12}.$$
\end{prop}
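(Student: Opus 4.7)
The plan is to adapt the three-regime $TT^*$ kernel analysis of Proposition \ref{dsdf} to the comparable-frequency setting $Ch \leq \sigma \leq h$. Because $\sigma$ and $h$ differ only by a bounded multiplicative constant, Regime 2 (where $C\sigma \leq |t-s| \leq Ch$) has length $O(h)$ and is absorbed into Regime 1, so the kernel $W(t,x',s,z')$ need only be split into two pieces: $W_1$ on $\{|t-s| \leq Ch\}$ and $W_3$ on $\{|t-s| > Ch\}$. As a sanity check, a direct computation shows that $F_{n,p}(h)$ equals the formal boundary value $G_{n,p}(h,h)$ of the bound in Proposition \ref{dsdf} for every $(n,p) \neq (3,2)$, with the exceptional pair carrying an additional $|\log h|^{1/2}$.

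Setting $t = x_1$ and repeating verbatim the Duhamel, parametrix, and $TT^*$ reduction from Proposition \ref{dsdf}, matters reduce to showing
$$\left\|\int W_i(t,s) f(s)\,ds\right\|_{L^p_{t,x'}} \lesssim F_{n,p}(h)^2 \|f\|_{L^{p'}_{t,x'}}, \qquad i=1,3.$$
For $W_1$, the integration-by-parts argument in $\eta'$ leading to \eqref{eq:W1bound} remains valid, and substituting $\sigma \approx h$ yields $\|W_1(t,s)\|_{L^{p'}_{x'} \to L^p_{x'}} \lesssim h^{-2(n-1)+2(n-1)/p}$. Young's inequality in $t$ against the indicator of an interval of length $O(h)$ (whose $L^{p/2}_t$ norm is $\lesssim h^{2/p}$) then produces a contribution that is $\lesssim F_{n,p}(h)^2$ throughout the full range of $(n,p)$.

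For $W_3$, the $L^1_{x'} \to L^\infty_{x'}$ and $L^2_{x'} \to L^2_{x'}$ estimates from the Regime 3 analysis of Proposition \ref{dsdf} survive the substitution $\sigma \approx h$, and interpolation yields
$$\|W_3(t,s)\|_{L^{p'}_{x'} \to L^p_{x'}} \lesssim h^{-(n-1)+(n-1)/p} |t-s|^{-(n-1)+(n-1)/p}.$$
The time integration is then resolved by Young's inequality with an $L^{p/2}_t$ kernel when $p > (n+1)/(n-1)$ (where $\|K\|_{L^{p/2}_t}$ blows up like $h^{-(n-1)+(n+1)/p}$), by the one-dimensional Hardy--Littlewood--Sobolev inequality at the endpoint $p = (n+1)/(n-1)$, and by Young against a locally integrable time kernel when $p < (n+1)/(n-1)$ (only relevant for $n=2$). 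In every non-exceptional case this reproduces exactly the desired bound $F_{n,p}(h)^2$.

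The main obstacle is the critical pair $(n,p) = (3,2)$, where the time kernel has exponent $-(n-1)+(n-1)/p = -1$ and sits precisely at the $L^2 \to L^2$ endpoint of Hardy--Littlewood--Sobolev on $\mathbb{R}^1$, which fails. Following the argument of \cite[Proposition 5.1]{T}, one exploits the semiclassical compact support of $t$ to perform a dyadic decomposition of $\{h \lesssim |t-s| \lesssim 1\}$ into $O(\log h^{-1})$ scales, estimates each dyadic piece using the $L^2 \to L^2$ kernel bound above, and sums by the Cauchy--Schwarz inequality. The resulting $O(\log h^{-1})$-term sum contributes an extra factor of $|\log h|$ to the squared bilinear bound, yielding the claimed $F_{3,2}(h) = h^{-1/2}|\log h|^{1/2}$ after taking square roots.
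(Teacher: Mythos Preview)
Your approach is correct but takes a genuinely different route from the paper. The paper does \emph{not} re-run the $TT^\star$ kernel analysis of Proposition \ref{dsdf}; instead it rescales so that $v$ becomes an $O_{L^2}(h)$ quasimode of $(hD_{x_1}-\tilde a_2(x,hD_{x'}))$, forms the tensor $f(t,x',z')=u(t,x')v(t,z')$, observes that $f$ is an $O_{L^2}(h)$ quasimode of $hD_t - a_1(t,x',hD_{x'}) - \tilde a_2(t,z',hD_{z'})$ on $\R^{2n-1}$, and reads $\|uv\|_{L^p}$ as the $L^p$ norm of $f$ restricted to the codimension-$(n-1)$ diagonal $\{x'=z'\}$. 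The result is then a one-line citation of the submanifold restriction estimates \cite[Theorem 1.7]{T}. This is considerably shorter, and it explains structurally why $F_{n,p}(h)$ has the same exponents as the restriction bounds in \cite{T} (including the $\log$ loss at $(n,p)=(3,2)$). Your approach, by contrast, is self-contained and makes transparent that $F_{n,p}(h)=G_{n,p}(h,h)$ arises as the boundary value of Proposition \ref{dsdf}.

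One point in your argument deserves a sentence of justification. You assert that the Regime 3 $L^2_{x'}\to L^2_{x'}$ bound \eqref{eq:3rdL2L2} ``survives the substitution $\sigma\approx h$''. In Proposition \ref{dsdf} that bound was obtained from the lower bound $|\nabla_{x'}(\tilde\theta_1+\tilde\theta_2)|\gtrsim |y'-z'|/(\sigma|t-s|)$, and the paper explicitly invoked $h>C\sigma$ so that the $\sigma^{-1}$ term dominates the $h^{-1}$ term. When $\sigma\approx h$ the two terms are comparable, and in principle the matrices $h^{-1}\partial^2_{x'z'}\theta_1$ and $\sigma^{-1}\partial^2_{x'z'}\theta_2$ could cancel. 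They do not, because the hypothesis that $\partial^2_{\xi'\xi'}a_1$ and $\partial^2_{\xi'\xi'}a_2$ are both \emph{positive definite} forces $(\partial^2_{\xi'\xi'}\phi_1)^{-1}$ and $(\partial^2_{\eta'\eta'}\phi_2)^{-1}$ (and hence $\partial^2_{x'z'}\theta_1$, $\partial^2_{x'z'}\theta_2$ via \eqref{eq:thetapartialpartial}) to have the same definiteness; their sum is therefore again nondegenerate with norm $\gtrsim h^{-1}|t-s|^{-1}$. You should state this explicitly.
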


\begin{rmk}
When $\sigma\approx h$, $F_{n,p}(h)\approx G_{n,p}(h,\sigma)$, and in this case Proposition \ref{dsfs} gives the $L^p$ bilinear quasimode estimates in full range.
\end{rmk}

\begin{proof}
Note that since $Ch\leq{}\sigma\leq{}h$ the ratio $\sigma/h\approx1$. Assume that $\|u\|_{L^2}=\|v\|_{L^2}=1$. By scaling $\xi$ we obtain
$$\norm{(hD_{x_{1}} -\tilde{a}_2(x,hD_{x'}))v}_{L^{2}}=O(h),$$
where
$$\tilde{a}_2(x,\xi')=\frac{h}{\sigma}a_2\left(x,\frac{\sigma}{h}\xi\right).$$
Therefore, we can treat both $u$ and $v$ as quasimodes at the same scale. We adopt the notation $t=x_{1}$ and write $x\in\M$ as $(t,x')$. Then consider the function
$$f(t,x',z')=u(t,x')v(t,z').$$
We want to calculate the $L^{p}$ norm of $f$ restricted to the submanifold $x'=z'$. Note that
$$(hD_{t}-a_1(t,x',hD_{x'})-\tilde{a}_2(t,z',hD_{z'}))f(t,x',z')=O_{L^{2}}(h).$$
Therefore, we may directly apply the submanifold restriction estimates of Tacy \cite[Theorem 1.7]{T}. When $n=2$, we are restricting from a $3$-dimensional space to a $2$-dimensional hypersurface, so we obtain
$$F_{2,p}(h)=\begin{cases}
h^{-\frac{1}{2}+\frac{1}{2p}} & \text{for }2\leq{}p\leq{}3,\\
h^{-1+\frac{2}{p}} & \text{for }3\leq{}p\leq{}\infty.\end{cases}$$
For $n\geq{}3$, we are restricting from a $(2n-1)$-dimensional space to an $n$-dimensional submanifold, so we obtain
$$F_{n,p}(h)=h^{-n+1+\frac{n}{p}}.$$
Note that as in Tacy \cite{T} we must concede a $\log$ loss for the case $(n,p)=(3,2)$, (in which case, we restrict a $5$-dimensional space to a $3$-dimensional submanifold).
\end{proof}

\subsection{Propagating in different directions (Case 2)}
Recall the notation $x=(x_{1},x_{2},\bar{x})$ and $\xi=(\xi_1,\xi_2,\bar\xi)$.
\begin{prop}\label{dd}
Assume the notations in Case 2:
$$\norm{(hD_{x_1}-a_{1}(x,hD_{x_{2}},hD_{\bar{x}}))u(h)}_{L^{2}}\lesssim{}h\norm{u(h)}_{L^{2}},$$
and
$$\norm{(\sigma{}D_{x_{2}}-a_{2}(x,\sigma{}D_{x_{1}},\sigma{}D_{\bar{x}}))v(\sigma)}_{L^{2}}\lesssim{}\sigma\norm{v(\sigma)}_{L^{2}},$$
where $\partial^2_{(\xi_2,\bar\xi)}a_1$ and $\partial_{(\xi_1,\bar\xi)}^2a_2$ are non-degenerate. Then
$$\norm{uv}_{L^{p}}\lesssim{}G_{n,p}(h,\sigma)\norm{u}_{L^{2}}\norm{v}_{L^{2}}.$$
\end{prop}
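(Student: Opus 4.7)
The plan is to mirror the proof of Proposition \ref{dsdf}, but now with two independent ``time'' variables $t_1=x_1$ and $t_2=x_2$. Applying Duhamel's principle to each quasimode separately, it suffices to bound
\[
\norm{U_h(x_1)f(x_2,\bar x)\cdot\widetilde U_\sigma(x_2)g(x_1,\bar x)}_{L^p_X}\les G_{n,p}(h,\sigma)\norm{f}_{L^2}\norm{g}_{L^2},
\]
where $U_h$ and $\widetilde U_\sigma$ are the FIO parametrices for the two evolutions from Section \ref{sec:evo}. Regarding the bilinear map as a linear map $\widetilde T:L^2(\R^{2(n-1)})\to L^p(\R^n)$ restricted to tensor products, I apply $TT^\star$ to reduce the estimate to $\|\widetilde T\widetilde T^\star\|_{L^{p'}\to L^p}\les G_{n,p}(h,\sigma)^2$.

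The kernel of $\widetilde T\widetilde T^\star$ factors as $K(X,X')=K_h(X,X')\cdot K_\sigma(X,X')$, where $K_h$ is the integral kernel of $U_h(x_1)U_h(x'_1)^\star$ in the $y=(x_2,\bar x)$ variables (with $x_1$ as time) and $K_\sigma$ is the kernel of $\widetilde U_\sigma(x_2)\widetilde U_\sigma(x'_2)^\star$ in the $z=(x_1,\bar x)$ variables (with $x_2$ as time). By the Laplace-like assumption, the Hessian lower bounds $\ges|x_1-x'_1|^{n-1}$ and $\ges|x_2-x'_2|^{n-1}$, analogous to \eqref{eq:hHessian} and \eqref{eq:sigmaHessian}, yield
\[
|K_h|\les h^{-\frac{n-1}{2}}(h+|x_1-x'_1|)^{-\frac{n-1}{2}},\quad |K_\sigma|\les\sigma^{-\frac{n-1}{2}}(\sigma+|x_2-x'_2|)^{-\frac{n-1}{2}},
\]
supported respectively on $|x_2-x'_2|,|\bar x-\bar x'|\les h+|x_1-x'_1|$ and $|x_1-x'_1|,|\bar x-\bar x'|\les\sigma+|x_2-x'_2|$. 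Intersecting these supports forces $|x_1-x'_1|\approx|x_2-x'_2|$ outside a ball of radius $h$.

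Next I decompose $K=W_1+W_2+W_3$ dyadically in $T:=\max(|x_1-x'_1|,|x_2-x'_2|)$ across the ranges $T\les\sigma$, $\sigma\les T\les h$, $T\ges h$, directly parallel to the three regimes in the proof of Proposition \ref{dsdf}. For each $W_j$, the $L^1_X\to L^\infty_X$ estimate is immediate from the pointwise bound. For the $L^2_X\to L^2_X$ estimate, I carry out a second $TT^\star$-step whose Schwartz kernel is an oscillatory integral over $X$ with joint phase $\tilde\theta_1/h+\tilde\theta_2/\sigma$; the computation \eqref{eq:thetapartialpartial} applied to each factor, together with $\sigma\le h$, gives $|\nabla_X(\tilde\theta_1/h+\tilde\theta_2/\sigma)|\ges|Y'-Z'|/(\sigma T)$, so that integration by parts in $X$ delivers the required factor of $\sigma/|Y'-Z'|$ per step. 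Interpolating between the two endpoint bounds produces $L^{p'}_X\to L^p_X$ estimates on each $W_j$, which are then summed via Young's and Hardy--Littlewood--Sobolev inequalities in the two-dimensional $(x'_1,x'_2)$ time integration to conclude $\|\widetilde T\widetilde T^\star\|_{L^{p'}\to L^p}\les G_{n,p}(h,\sigma)^2$.

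The main obstacle is the book-keeping for the two transverse propagation directions: each contributes its own oscillation at a different scale ($h$ versus $\sigma$), and the composite support conditions, the two-dimensional time integration, and the integration-by-parts gain for the $L^2\to L^2$ bound must be reconciled consistently. The smallness assumptions $|(\partial_{\xi_2}a_1,\nabla_{\bar\xi}a_1)|\le\ve$ and $|(\partial_{\xi_1}a_2,\nabla_{\bar\xi}a_2)|\le\ve$ ensure that the two propagations remain genuinely transverse, preventing spurious cancellations in the joint phase and guaranteeing that the stationary point and Hessian analyses for $K_h$ and $K_\sigma$ decouple cleanly as described.
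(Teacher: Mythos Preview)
Your setup (Duhamel in each time variable, the $TT^{\star}$ reduction, and the factorisation of the kernel as $K_hK_\sigma$) matches the paper exactly. The gap is in your Regime~3. You assume that when $T\gtrsim h$ both $K_h$ and $K_\sigma$ have been reduced by stationary phase, producing a joint phase $\tilde\theta_1/h+\tilde\theta_2/\sigma$ to which you apply the Case~1 integration-by-parts argument. In Case~2 this never happens: the smallness $|(\partial_{\xi_2}a_1,\nabla_{\bar\xi}a_1)|\le\ve$ and $|(\partial_{\xi_1}a_2,\nabla_{\bar\xi}a_2)|\le\ve$ forces the critical-point conditions for $K_h$ and $K_\sigma$ to be mutually exclusive. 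Concretely, a stationary point for $K_h$ requires $|t_2-s_2|\lesssim\ve|t_1-s_1|$, while one for $K_\sigma$ requires $|t_1-s_1|\lesssim\ve|t_2-s_2|$; for small $\ve$ these are incompatible. So there is no region in which the ``joint phase'' $\tilde\theta_1/h+\tilde\theta_2/\sigma$ is defined, and your $L^2\to L^2$ argument for Regime~3, which relies on \eqref{eq:thetapartialpartial} applied to \emph{each} factor, cannot be run.

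This is exactly why the paper does \emph{not} carry over the three-regime decomposition from Proposition~\ref{dsdf}. Instead it splits into only two pieces, $W_1$ (where $|t_2-s_2|\le|t_1-s_1|$, so $K_\sigma$ has no critical point and integration by parts gives $|K_\sigma|\lesssim\sigma^{-(n-1)}(1+|t_2-s_2|/\sigma)^{-N}(1+|\bar x-\bar z|/\sigma)^{-N}$) and $W_2$ (the complementary region, where only $K_\sigma$ may have a critical point, reducing to the Regime~2 analysis of Proposition~\ref{dsdf}). Your remark that the $\ve$-assumptions ``prevent spurious cancellations in the joint phase'' understates their role: they eliminate the joint-phase scenario altogether, and with it the need for any analogue of Regime~3. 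If you sharpen your support claims to $|t_2-s_2|,|\bar x-\bar z|\lesssim h+\ve|t_1-s_1|$ for $K_h$ (and symmetrically for $K_\sigma$), you will see that the intersection of the two supports is already contained in $\{T\lesssim h\}$, i.e.\ your Regime~3 is essentially empty.
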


\begin{proof}
Let
$$E_h[u]=(hD_{x_1}-a_{1}(x,hD_{x_{2}},hD_{\bar{x}}))u(h),$$
and
$$E_\sigma[v]=(\sigma{}D_{x_{2}}-a_{2}(x,\sigma{}D_{x_{1}},\sigma{}D_{\bar{x}}))v(\sigma).$$
We set $x_{1}=t_{1}$ and $x_{2}=t_{2}$ and express $u$ and $v$ via the propagators
$$\begin{cases}
(hD_{t_1}-a_{1}(s+t_1,x_{2},\bar{x},hD_{x_{2}},hD_{\bar{x}}))U_{h}(t_{1},s)=0,\\
U_{h}(0,s)=\Id,\end{cases}$$
and
$$\begin{cases}
(\sigma{}D_{t_2}-a_{2}(x_{1},s+t_2,\bar{x},hD_{x_{1}},hD_{\bar{x}}))\widetilde{U}_{\sigma}(t,s)=0,\\
\widetilde{U}_{\sigma}(0,s)=\Id.\end{cases}$$
Again we write $U_{h}(t_{1}):=U_{h}(t_{1},0)$ and $\widetilde{U}_{\sigma}(t_{2}):=\widetilde{U}_{\sigma}(t_{2},0)$ for notational convenience. Now we may write
$$u(t_1,t_2,\bar x)=U_{h}(t_{1})u(0,t_2,\bar{x})+\frac{1}{h}\int_{0}^{t_{1}}U_{h}(t_{1}-s_{1},s_{1})E_{h}[u]\,ds_{1},$$
and
$$v(t_1,t_2,\bar x)=\widetilde{U}_{\sigma}(t_{2})v(t_{1},0,\bar{x})+\frac{1}{\sigma}\int_{0}^{t_{2}}\widetilde{U}_{\sigma}(t_{2}-s_{2},s_{2})E_{\sigma}[v]\,ds_{2}.$$
So
\begin{eqnarray*}
uv(t_1,t_2,x)&=&U_{h}(t_{1})u(0,t_2,\bar{x})\widetilde{U}_{\sigma}(t_{2})v(t_{1},0,\bar{x})+\frac{1}{h}\widetilde{U}_{\sigma}(t_{2})v(t_{1},0,\bar{x})\int_{0}^{t_{1}}U_{h}(t_{1}-s_{1},s_{1})E_{h}[u]\,ds_{1}\\
&&+\frac{1}{\sigma}U_{h}(t_{1})u(0,t_2,\bar{x})\int_{0}^{t_{2}}\widetilde{U}_{\sigma}(t_{2}-s_{2},s_{2})E_{\sigma}[v]\,ds_{2}\\
&&+\frac{1}{\sigma{}h}\int_{0}^{t_{1}}\int_{0}^{t_{2}}U_{h}(t_{1}-s_{1},s_{1})E_{h}[u]\widetilde{U}_{\sigma}(t_{2}-s_{2},s_{2})E_{\sigma}[v]\,ds_{1}ds_{2}.
\end{eqnarray*}
As in Proposition \ref{dsdf}, it is enough to prove 
$$\norm{U_{h}(t_{1})f(t_2,\bar x)\widetilde{U}_{\sigma}(t_{2})g(t_1,\bar x)}_{L^{p}_{t_{1},t_{2},\bar{x}}}\lesssim{}G_{n,p}(h,\sigma)\norm{f}_{L^{2}_{t_{2},\bar{x}}}\norm{g}_{L^{2}_{t_{1},\bar{x}}}.$$
Using the parametrix construction, we have
$$U_{h}(t_1)f(t_2,\bar x)=\frac{1}{h^{n-1}}\int{}e^{\frac{i}{h}(\phi_{1}(t_{1},t_{2},\bar{x},\xi_2,\bar\xi)-s_{2}\xi_{2}-\langle{}\bar{y},\bar{\xi}\rangle)}b_{1}(t_{1},t_{2},\bar{x},\xi_2,\bar\xi)f(s_2,\bar{y})\,ds_{2}d\bar{y}d\xi_2d\bar\xi,$$
and
$$\widetilde{U}_{\sigma}(t_{2})g(t_1,\bar x)=\frac{1}{\sigma^{n-1}}\int{}e^{\frac{i}{\sigma}(\phi_{2}(t_{2},t_{1},\bar{x},\eta_1,\bar\eta)-s_{1}\eta_{1}-\langle{}\bar{w},\bar{\eta}\rangle)}b_{2}(t_{1},t_{2},\bar{x},\eta_1,\bar\eta)g(s_{1},\bar{w})\,ds_{1}d\bar{w}d\eta_1d\bar\eta,$$
where
$$\partial_{t_{1}}\phi_{1}(t_{1},t_{2},\bar{x},\xi_2,\bar\xi)-a_{1}(t_{1},t_{2},\bar{x},\partial_{x_2}\phi_1,\nabla_{\bar x}\phi_{1})=0,\quad{}\phi_{1}(0,t_{2},\bar{x},\xi_2,\bar\xi)=t_{2}\xi_2+\langle{}\bar{x},\bar{\xi}\rangle,$$
and
$$\partial_{t_{2}}\phi_{2}(t_{2},t_{1},\bar{x},\eta_1,\bar\eta)-a_{2}(t_{2},t_{1},\bar{x},\partial_{x_1}\phi_2,\nabla_{\bar x}\phi_{2})=0,\quad{}\phi_{2}(0,t_{1},\bar{x},\eta_1,\bar\eta)=t_{1}\eta_{1}+\langle{}\bar x,\bar\eta\rangle.$$
We will sometimes write $t=(t_{1},t_{2})$ for notational convenience. Therefore, we need to study the $L^{2}(\R^2)\times{}L^{2}(\R^{2(n-2)})\to{}L^{p}(\R^2)\times{}L^{p}(\R^{n-2})$ mapping properties of the operator $W_{h,\sigma}(t)$ given by
\begin{multline}
(W_{h,\sigma}(t)h)(\bar x)\\
=\frac{1}{(h\sigma)^{n-1}}\int{}e^{i\left[\frac{1}{h}(\phi_{1}(t_{1},t_{2},\bar{x},\xi_2,\bar\xi)-s_{2}\xi_{2}-\langle{}\bar{y},\bar{\xi}\rangle)+\frac{1}{\sigma}(\phi(t_{2},t_{1},\bar{x},\eta_1,\bar\eta)-s_{1}\xi_{1}-\langle{}\bar{w},\bar{\eta}\rangle)\right]}b(t_{1},t_{2},\bar{x},\xi_1,\bar\xi,\eta_2,\bar\eta)\times\\
h(s_{1},s_{2},\bar{y},\bar{w})\,d\xi_1d\bar\xi d\eta_2d\bar\eta ds_{1}ds_{2}d\bar{y}d\bar{w}.\label{Whsigmadef2}\end{multline}
To this end, we evaluate $W_{h,\sigma}(t)W_{h,\sigma}^{\star}(s)$:
$$(W_{h,\sigma}(t)W_{h,\sigma}^{\star}(s))f(\bar x)=\int{}W(t,\bar{x},s,\bar{z})f(s,\bar{z})\,d\bar{z}ds.$$
Here,
$$W(t,\bar{x},s,\bar{z})=\frac{1}{(h\sigma)^{2(n-1)}}\int{}e^{i\psi_{h,\sigma}(t,r,s,\bar{x},\bar{y},\bar{w},\bar{z},\xi_2,\bar\xi,\zeta_2,\bar\zeta,\eta_1,\bar\eta,\tau_1,\bar\tau)}b(t,r,s,\bar{x},\bar{z},\xi_2,\bar\xi,\zeta_2,\bar\zeta,\eta_1,\bar\eta,\tau_1,\bar\tau)\,d\Lambda,$$
where
$$d\Lambda=drd\bar{y}d\bar{w}d\xi_2d\bar\xi d\zeta_2d\bar\zeta d\eta_1d\bar\eta d\tau_1d\bar\tau,$$
and
\begin{eqnarray*}
&&\psi_{h,\sigma}(t,r,s,\bar{x},\bar{y},\bar{w},\bar{z},\xi_2,\bar\xi,\zeta_2,\bar\zeta,\eta_1,\bar\eta,\tau_1,\bar\tau)\\
&=&\frac{1}{h}\Big[\phi_{1}(t_{1},t_{2},\bar{x},\xi_2,\bar\xi)-\phi_{1}(s_{1},s_{2},\bar{z},\zeta_2,\bar\zeta)-r_{2}(\xi_{2}-\zeta_{2})-\langle{}\bar{y},\bar{\xi}-\bar{\zeta}\rangle\Big]\\
&&+\frac{1}{\sigma}\Big[\phi_{2}(t_2,t_1,\bar x,\eta_1,\bar\eta)-\phi_{2}(s_2,s_1,\bar z,\tau_1,\bar\tau)-r_{1}(\eta_{1}-\tau_{1})-\langle{}\bar{y},\bar{\eta}-\bar{\tau}\rangle\Big].
\end{eqnarray*}
We use stationary phase to calculate the $(r_{2},\bar{y},\zeta_2,\bar\zeta,r_{1},\bar{w},\tau_1,\bar\tau)$ integral. As the stationary point is always non-degenerate, we obtain
\begin{eqnarray*}
W(t,\bar{x},s,\bar{z})&=&\frac{1}{h^{n-1}}\int{}e^{\frac{i}{h}(\phi_{1}(t_{1},t_{2},\bar{x},\xi_2,\bar\xi)-\phi_{1}(s_{1},s_{2},\bar{z},\xi_{2},\bar{\xi}))}b(t,s,\bar{x},\bar{z},\xi_{2},\bar{x})\,{}d\xi_{2}d\bar{\xi}\\
&&\times\frac{1}{\sigma^{n-1}}\int{}e^{\frac{i}{h}(\phi_{2}(t_{2},t_{1},\bar{x},\eta_{1},\bar{\eta})-\phi_{2}(s_{2},s_{1},\bar{z},\eta_{1}\bar{\eta}))}\tilde{b}(t,s,\bar{x},\bar{z},\eta_1,\bar\eta)\,{}d\eta_1d\bar\eta\\
&:=&K_{h}(t,s,\bar{x},\bar{z})K_{\sigma}(t,s,\bar{x},\bar{z})\end{eqnarray*}
Now as in the proof of Proposition \ref{dsdf}, we analyse $K_{h}$ and $K_{\sigma}$ by studying the critical points of
\begin{equation}\phi_{1}(t_{1},t_{2},\bar{x},\xi_2,\bar\xi)-\phi_{1}(s_{1},s_{2},\bar{z},\xi_{2},\bar{\xi})\quad\mbox{in $(\xi_{2},\bar{\xi})$}\label{2hkernelphasei}\end{equation}
and
\begin{equation}\phi_{2}(t_{2},t_{1},\bar{x},\eta_{1},\bar{\eta})-\phi_{2}(s_{2},s_{1},\bar{z},\eta_{1}\bar{\eta})\quad\mbox{in $(\eta_{1},\bar{\eta})$}\label{2sigmakernelphase}\end{equation}
For both \eqref{2hkernelphasei} and \eqref{2sigmakernelphase}  to have critical points we must have both 
$$0=(t_{2}-s_{2})+(\bar{x}-\bar{y})\left(1+O(|s|)\right)+(t_{1}-s_{1})(\partial_{(\xi_2,\bar\xi)}a_{1}+O(|t-s|)),$$
and
$$0=(t_{1}-s_{1})+(\bar{x}-\bar{y})\left(1+O(|s|)\right)+(t_{2}-s_{2})(\partial_{(\eta_1,\bar\eta)}a_{2}+O(|t-s|)).$$
Since both $|\partial_{(\xi_2,\bar\xi)}a_{1}|$ and $|\partial_{(\eta_1,\bar\eta)}a_{2}|$ are less than $\ve$, to have both critical points we require
$$|t_{2}-s_{2}|\leq{}|t_{1}-s_{1}|\ve,\quad\text{and}\quad|t_{1}-s_{1}|\leq{}|t_{2}-s_{2}|\ve.$$
Clearly, if $\ve$ is chosen small enough this is impossible. Therefore, we are always able to integrate by parts in one of $(\xi_2,\bar\xi)$ and $(\eta_1,\bar\eta)$. We split the kernel into two parts
$$W(t,\bar{x},s,\bar{z})=W_{1}(t,\bar{x},s,\bar{z})+W_{2}(t,\bar{x},s,\bar{z}),$$
where
$$W_{1}(t,\bar{x},s,\bar{z})=\chi\left(\frac{|t_{2}-s_{2}|}{|t_{1}-s_{1}|}\right)W(t,\bar{x},s,\bar{z}),$$
and
$$W_{2}(t,\bar{x},s,\bar{z})=\left(1-\chi\left(\frac{|t_{2}-s_{2}|}{|t_{1}-s_{1}|}\right)\right)W(t,\bar{x},s,\bar{z}).$$
On the support of $W_{1}(t,\bar{x},s,\bar{z})$, we have $|t_{2}-s_{2}|\leq{}|t_{1}-s_{1}|$ and therefore we cannot find a critical point in $(\eta_1,\bar\eta)$. So integrating by parts we obtain
$$|K_{\sigma}(t,s,\bar{x},\bar{z})|\lesssim{}\sigma^{-(n-1)}\left(1+\frac{|t_{2}-s_{2}|}{\sigma}\right)^{-N}\left(1+\frac{|\bar{x}-\bar{z}|}{\sigma}\right)^{-N}$$
and so using the trivial $h^{-(n-1)}$ estimate on $K_{h}$ we have
$$|W_{1}(t,\bar{x},s,\bar{z})|\lesssim{}\sigma^{-(n-1)}h^{-(n-1)}\left(1+\frac{|t_{2}-s_{2}|}{\sigma}\right)^{-N}\left(1+\frac{|\bar{x}-\bar{z}|}{\sigma}\right)^{-N}.$$
Using this estimate we obtain the $L^1_{\bar x}\to L^\infty_{\bar x}$ and $L^2_{\bar x}\to L^2_{\bar x}$ norms of $W_1(t,s)$ via Young's inequality.  Then we may use the Strichartz formalism as in Case 1 to get the $L^{p'}_{t,\bar x}\to{}L^{p}_{t,\bar x}$ norm of $W_{1}$:
$$\norm{W_{1}}_{L^{p'}_{t,\bar x}\to{}L^{p}_{t,\bar x}}\lesssim{}h^{-(n-1)}\sigma^{-(n-1)+\frac{2n}{p}}.$$
Now on the support of $W_{2}$ we have $|t_{1}-s_{1}|<|t_{2}-s_{2}|$ therefore there may only be critical points for $K_{\sigma}$ but not $K_{h}$.  This is the same as in the second regime in the proof of Proposition \ref{dsdf} so we inherit those estimates here.
\end{proof}

%%%%%%%%%%%%%%%%%%%%%%%%%%%%%%%%%%%%%%%%%%%%%%%%%%%%%%%%%%%%
\section{Sharpness of the $L^p$ bilinear estimates}\label{sec:sharpness}

\subsection{Flat Model}
We study the flat model, that is, localised quasimodes of the Laplacian in $\R^n$, to gain insight into sharp examples. 

Suppose that $u$ is an $L^{2}$ normalised $O_{L^2}(h)$ quasimode of $\Delta_{\R^n}$. Then under Fourier transform
$$\norm{(|\xi|^{2}-1)\mathcal{F}_{h}(u)}\lesssim{}h,$$
where $\mathcal{F}_{h}$ is the semiclassical Fourier transform defined as
$$\mathcal{F}_{h}(u)(\xi)=\frac{1}{(2\pi{}h)^{n/2}}\int_{\R^n}e^{-\frac{i}{h}\langle{}x,\xi\rangle}u(x)\,dx.$$ 
Then $\mathcal{F}_{h}[u]$ must be located near the sphere of radius $1$ in the $\xi$-space. We create a family of quaismodes indexed by $\alpha$ which controls the degree of angular dispersion of $\xi$. Write $\xi=(r,\omega)$ where $\omega\in{}S^{n-1}$ and set the coordinate system so that $\omega_{0}$ corresponds with the unit vector in the $\xi_{1}$ direction. Let
$$\chi_{\alpha}^{h}(r,\omega)=\begin{cases}
1 & \text{if }|r-1|<h,|\omega-\omega_{0}|<h^{\alpha},\\
0 & \mbox{otherwise}.\end{cases}$$
Then set
$$f^h_{\alpha}(\xi)=f^{h}_{\alpha}(r,\omega)=h^{-1/2-\alpha(n-1)/2}\chi_\alpha^h(r,\omega).$$
Note that $f^{h}_{\alpha}$ is $L^{2}$ normalised. Now set
$$T^h_{\alpha}(x)=\mathcal{F}_{h}^{-1}(f^h_{\alpha})(x)=\frac{1}{(2\pi{}h)^{n/2}}\int_{\R^n}e^{\frac{i}{h}\langle{}x,\xi\rangle}f_{\alpha}^h(\xi)\,d\xi.$$
$T^h_{\alpha}$ is an $L^{2}$ normalised $O(h)$ quasimode of $\Delta_{\R^n}$. We may write
$$T^h_{\alpha}(x)=\frac{h^{-1/2-\alpha(n-1)/2-n/2}e^{\frac{i}{h}x_{1}}}{(2\pi)^{n/2}}\int_{\R^n}e^{\frac{i}{h}(x_{1}(\xi_{1}-1)+\langle{}x,\xi'\rangle)}\chi_{\alpha}^h(\xi)\,d\xi.$$
Note that if $|x_{1}|<\ve{}h^{1-2\alpha}$ and $|x'|<\ve{}h^{1-\alpha}$ for sufficiently small $\ve>0$, the factor
$$e^{\frac{i}{h}(x_{1}(\xi_{1}-1)+\langle{}x',\xi'\rangle)}$$
does not oscillate so in this region
$$|T^h_{\alpha}(x)|>ch^{-(n-1)/2+\alpha(n-1)/2}.$$
\begin{figure}[h!]\label{Talphafig}
\includegraphics[scale=0.4]{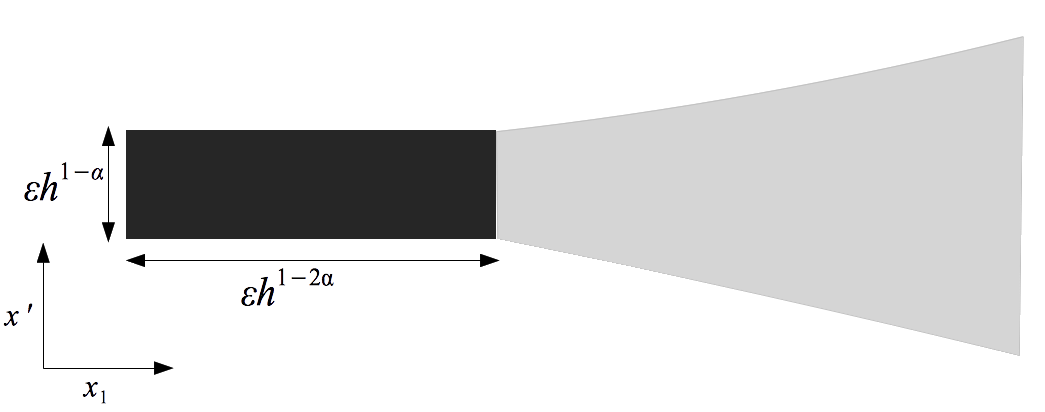}
\caption{$T^{h}_{\alpha}$ is localised so that is large in a $h^{1-2\alpha}\times{}(h^{1-\alpha})^{n-1}$ tube}
\end{figure}

Now suppose we have two semiclassical parameters $\sigma<h$. For each we have a family of tubes $T^\sigma_{\alpha}$ and $T_{\beta}^h$.  We will construct sharp examples by studying the products of such tubes.

\subsubsection{$L^p$ bilinear quasimode estimates for large $p$}
First we treat the case when $p>\frac{2(n+1)}{n-1}$. We set $\alpha=\beta=0$ and consider the product $T_{0}^\sigma T_{0}^h$. 
\begin{figure}[H]\label{highpfig}
\includegraphics[scale=0.5]{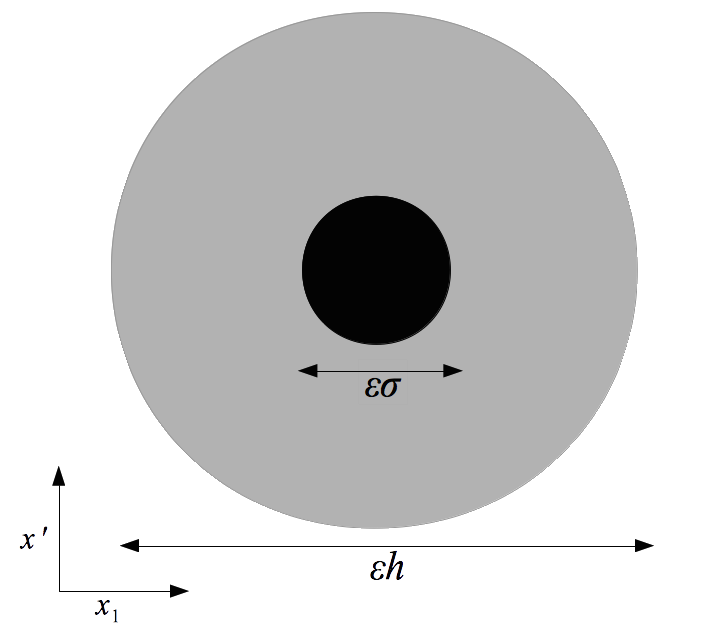}
\caption{To saturate the high $p$ estimates we superimpose $T^{h}_{0}$ and $T^{\sigma}_{0}$}
\end{figure}
For $|x|<\ve\sigma$, we have
$$|T_{0}^\sigma(x)|\gtrsim\sigma^{-(n-1)/2},\quad\text{and}\quad|T_{0}^h(x)|\gtrsim h^{-(n-1)/2}.$$
So
$$\|T_{0}^\sigma T_{0}^h\|_{L^{p}}\gtrsim\sigma^{-(n-1)/2}h^{-(n-1)/2}\sigma^{n/p}=c_{3}h^{-(n-1)/2}\sigma^{-(n-1)/2-n/p},$$
which therefore saturates the estimates of Theorem \ref{thm:bilinear} when $p\geq{}\frac{2(n+1)}{n-1}$.

\subsubsection{$L^p$ bilinear quasimode estimates for small $p$ in dimension $n\ge3$ and midrange $p$ in dimension two}\label{sec:qmmidrange}
We now construct saturating examples for $2\leq{}p\leq{}\frac{2(n+1)}{n-1}$ in dimension $n\geq{}3$, these examples also saturate the estimates for $3\leq{}p\leq{}6$ in dimension two. We choose $T^h_0$. Then we select the tube $T_{\alpha_{h}}^\sigma$ such that 
$$\sigma^{1-2\alpha_{h}}=h.$$ 
\begin{figure}[H]\label{midpfig}
\includegraphics[scale=0.3]{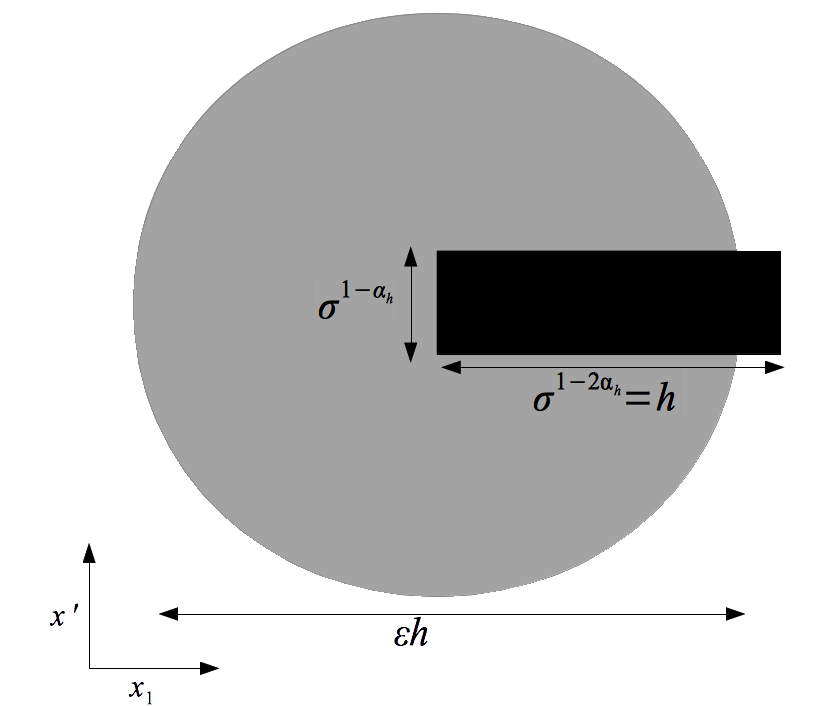}
\caption{To saturate the mid-range $p$ estimates we superimpose $T^{h}_{0}$ and $T^{\sigma}_{\alpha_{h}}$ where $\alpha_{h}$  is choose such that $\sigma^{1-2\alpha_{h}}=h$}
\end{figure}

Then for $|x_{1}|<\ve{}h$ and $|x'|<\ve{}\sigma^{1-\alpha}$, we have 
$$|T_{0}^h(x)|\gtrsim h^{-(n-1)/2},\quad\text{and}\quad|T_{\alpha_{h}}^\sigma(x)|\gtrsim\sigma^{-(n-1)/2+\alpha(n-1)/2}.$$
So
\begin{eqnarray*}
\norm{T_{0}^hT_{\alpha_h}^\sigma}_{L^{p}}&\gtrsim&h^{-(n-1)/2}\sigma^{-(n-1)/2+\alpha(n-1)/2}{}h^{1/p}\sigma^{(1-\alpha)(n-1)/p}\\
&=&Ch^{-3(n-1)/4+(n+1)/2p}h^{(n-1)/4-(n-1)/2p}\sigma^{-(n-1)/2+(n-1)/p+\alpha((n-1)/2-(n-1)/p)}\\
&=&Ch^{-3(n-1)/4+(n+1)/2p}\sigma^{-(n-1)/4+(n-1)/2p}.
\end{eqnarray*}
This saturates the sharp $L^p$ bilinear quasimode estimates for $2\leq{}p\leq{}\frac{2(n+1)}{n-1}$ in dimension $n\geq{}3$ and $3\le p\le6$ in dimension two.

\subsubsection{$L^p$ bilinear quasimode estimates for $2\le p\le3$ in dimension two}
Here we set $\alpha=\beta=1/2$ and consider the product $T_{1/2}(\sigma)T_{1/2}(h)$. 
\begin{figure}[H]\label{lowpfig}
\includegraphics[scale=0.4]{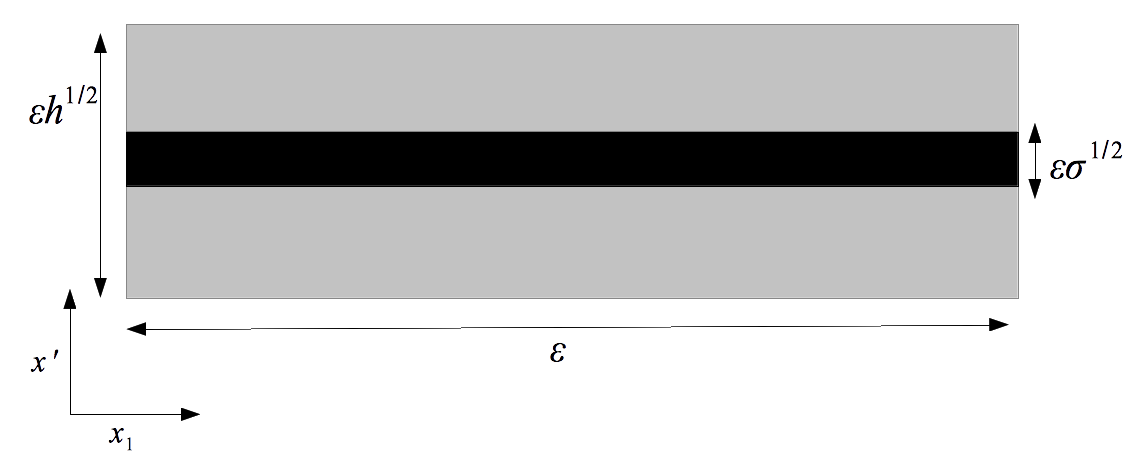}
\caption{To saturate the low $p$ estimates we superimpose $T^{h}_{1/2}$ and $T^{\sigma}_{1/2}$}
\end{figure}
For $|x_{1}|<\ve$ and $|x_{2}|<\ve{}\sigma^{1/2}$, we have
$$|T_{1/2}^\sigma(x)|\gtrsim\sigma^{-1/4},\quad\text{and}\quad|T_{1/2}(h)|\gtrsim h^{-1/4}.$$
So
$$\norm{T_{1/2}^\sigma T_{1/2}^h}_{L^{p}}\gtrsim h^{-1/4}\sigma^{-1/4}\sigma^{1/2p}=Ch^{-1/4}\sigma^{-1/4+1/2p},$$
which saturates the estimates for $2\leq{}p\leq{}3$ in dimension two.

\subsection{Spherical harmonics}\label{sec:SH}
In this subsection, we construct eigenfunctions on the sphere $\s^2$ that saturate the $L^p$ bilinear eigenfunction estimates in Theorem \ref{thm:efnbilinear}. These examples also reflect the same behaviour as the flat model in the previous subsection.

We first recall some standard facts about spherical harmonics as the eigenfunctions on the sphere.  The spherical harmonics are the homogeneous harmonic polynomials restricted on the sphere; we use $\SH_k$ to denote the set of such functions with homogeneous degree $k$. For each $u\in\SH_k$, $u$ is an eigenfunction of $-\Delta_{\s^2}$ on $\s^2$ with eigenvalue $k(k+1)$:
$$-\Delta_{\s^2} u=k(k+1)u.$$
Therefore, the eigenfrequency $\lambda=\sqrt{k(k+1)}\approx k$. The multiplicity of the eigenvalue $k(k+1)$ is the dimension of its eigenspace, $\dim\SH_k=2k+1$. We use  spherical coordinates $\phi\in[0,\pi]$ and $\theta\in[0,2\pi)$ so that $\s^2\ni x=(\sin\phi\cos\theta,\sin\phi\sin\theta,\cos\phi)$. One can write the standard orthonormal basis of $\SH_k$ as $\{Y_m^k\}_{m=-k}^k$:
$$Y_m^k(\phi,\theta)=C_{k,m}P_k^m(\cos\phi)e^{im\theta},$$
in which $C_{k,m}$ is the $L^2$ normalisation factor, and $P_k^m$ is the associated Legendre polynomial. We remark that the phase of $Y_m^k$ is constant on each longitude (i.e. $\theta=\const$), and their modulus is constant on each latitude (i.e. $\phi=\const$). Two special cases of spherical harmonics are as follows.
\begin{enumerate}[(i).]
\item $m=0$: $Z_k=Y_0^k$ are called zonal harmonics. $Z_k$ concentrates on the two antipodal points $\phi=0$ and $\phi=\pi$. They achieve the maximal norm growth in Sogge's $L^p$ estimate \eqref{eq:Sogge} for large $p$: 
$$\|Z_k\|_{L^p}\approx k^{2(1/2-1/p)-1/2},\quad6\le p\le\infty.$$
If $|\phi|\lesssim k^{-1}$, i.e. within a $k^{-1}$ neighborhood of the north pole, $|Z_k|\gtrsim k^{1/2}$. Similar estimates holds also around the south pole.
\item $m=\pm k$: $Q_{\pm k}=Y^k_{\pm k}$ are called highest weight spherical harmonics or Gaussian beams. $Q_{\pm k}$ concentrate in a $k^{-1/2}$ neighborhood of the equator $\phi=\pi/2$ and achieve the maximal norm growth in Sogge's $L^p$ estimate \eqref{eq:Sogge} for small $p$: 
$$\|Q_{\pm k}\|_{L^p}\approx k^{(1/2-1/p)/2},\quad2<p\le6.$$
Notice that $Q_{\pm k}$ decreases exponentially away from the concentration and $\|Q_{\pm k}\|_{L^\infty}\approx k^{1/4}$.
\end{enumerate}

To construct the sharp examples for $L^p$ bilinear eigenfunction estimates in Theorem \ref{thm:efnbilinear}, we divide the range of $p$ into large $p$ ($p\ge6$), midrange $p$ ($3\le p\le6$), and small $p$ ($2\le p\le3$). 

\subsubsection{$L^p$ bilinear eigenfunction estimates for $p\ge6$}
Let $\lfloor\lambda\rfloor$ be the largest integer that is smaller than $\lambda$. Write $u=Z_{\lfloor\lambda\rfloor}$ and $v=Z_{\lfloor\mu\rfloor}$. Then
$$|u(\phi,\theta)|\gtrsim\lambda^\frac12,\quad\text{if}\quad|\phi|<\ve\lambda^{-1},$$
and
$$|v(\phi,\theta)|\gtrsim\mu^\frac12,\quad\text{if}\quad|\phi|<\ve\mu^{-1}.$$
So 
$$\|uv\|_{L^p}\gtrsim\lambda^\frac12\mu^{\frac12-\frac2p},$$
which saturates the estimate in Theorem \ref{thm:efnbilinear} when $p\ge6$.

\subsubsection{$L^p$ bilinear eigenfunction estimates for $2\le p\le3$}
Let $u=Q_{\lfloor\lambda\rfloor}$ and $v=Q_{\lfloor\mu\rfloor}$. Then
$$|u(\phi,\theta)|\gtrsim\lambda^\frac14,\quad\text{if}\quad\left|\phi-\frac\pi2\right|<\ve\lambda^{-\frac12},$$
and
$$|v(\phi,\theta)|\gtrsim\mu^\frac14,\quad\text{if}\quad\left|\phi-\frac\pi2\right|<\ve\mu^{-\frac12}.$$
So 
$$\|uv\|_{L^p}\gtrsim\lambda^\frac14\mu^{\frac14-\frac{1}{2p}},$$
which saturates the estimate in Theorem \ref{thm:efnbilinear} when $2\le p\le3$. We remark that on $\s^n$ when $n\ge3$, Burq-G\'erard-Tzvetkov \cite{BGT5} proved that two zonal harmonics saturate the $L^2$ bilinear eigenfunction estimates in Theorem \ref{thm:BGT} (modulo $\log$ when $n=3$).

\subsubsection{$L^p$ bilinear eigenfunction estimates for $3\le p\le6$}
For the smaller eigenfrequency $\lambda$, we let $u=Z_{\lfloor\lambda\rfloor}$. Then 
$$|u(\phi,\theta)|\gtrsim\lambda^\frac12,\quad\text{if}\quad|\phi|<\ve\lambda^{-1}.$$
Let $\alpha\in[0,1/2)$. We set
\begin{equation}\label{eq:lambdamu}
\mu=\lambda^{\frac{1}{1-2\alpha}},
\end{equation} 
and construct the eigenfunction $v$ such that
\begin{equation}\label{eq:midrangev}
|v(\phi,\theta)|\gtrsim\mu^{\frac{1-\alpha}{2}},\quad\text{if}\quad|x_1|<\ve\lambda^{-1}=\ve\mu^{-(1-2\alpha)}\ \text{and}\ |x_2|<\ve\mu^{-(1-\alpha)}.
\end{equation}
Recall that in our notation, $\s^2\ni(x_1,x_2,x_3)=(\sin\phi\cos\theta,\sin\phi\sin\theta,\cos\phi)$. One sees immediately that this is the eigenfunction version of the flat modal in \S\ref{sec:qmmidrange}. In  view of \eqref{eq:lambdamu}
$$\|uv\|_{L^p}\gtrsim\lambda^\frac12\mu^{\frac{1-\alpha}{2}}\lambda^{-\frac1p}\mu^{-\frac{1-\alpha}{p}}\ge\lambda^{\frac34-\frac{3}{2p}}\mu^{\frac14-\frac{1}{2p}}.$$
 To construct the required spherical harmonic $v$, we use linear combination of Gaussian beams. 

Set $k=\lfloor\mu\rfloor$. According to the different propagating directions of $Q_k$ and $Q_{-k}$, we say that $Q_k$ has the north pole $\phi=0$ as its pole and $Q_{-k}$ has the south pole $\phi=\pi$ as its pole. Since $\Delta$ is rotational invariant, given any point $p\in\s^2$, we can find a Gaussian beam with pole $p$ by rotating $Q_k$. Two Gaussian beams with the same pole only differ by a phase shift. (See Han \cite[Lemma 8]{Ha} for more details.) Hence, a Gaussian beam in $\SH_k$ is uniquely determined by its pole and the phase of some point on the sphere.

Next we need a theorem in Han \cite{Ha}, which shows that for a family of well-separated poles $\{p_j\}_{j=1}^m\subset\s^2$, the corresponding Gaussian beams $\{q_j\}_{j=1}^m\subset\SH_k$ are almost orthogonal. This enables us to estimate the $L^2$ norm of a superposition of well-separated Gaussian beams.

\begin{lemma}\label{lemma:almost}
There exists a positive number $d>0$ such that the following statement is true. For any family of well-separated poles $\{p_j\}_{j=1}^m\subset\s^2$ satisfying 
$$\dist(p_i,p_j)\ge d\cdot k^{-\frac12}\quad\text{for all }i,j,$$ the eigenvalues of the Hermitian matrix $(\langle q_i,q_j\rangle)$ are all in $[1/2,2]$. Here, $q_j$ is the Gaussian beam with pole $p_j$ and $\langle\cdot,\cdot\rangle$ denotes the inner product in $L^2(\s^2)$.
\end{lemma}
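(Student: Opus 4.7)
My plan is to view the Gram matrix $G = (\langle q_i, q_j\rangle)$ as a small perturbation of the identity and apply Gershgorin's disk theorem. Since each $q_j$ is $L^2$-normalised, the diagonal entries equal $1$, so it suffices to prove the row-sum bound
$$\sum_{i \neq j} |\langle q_i, q_j\rangle| \le \tfrac{1}{2} \quad \text{for every } j.$$
Every eigenvalue of $G$ then lies in $[1/2,3/2] \subset [1/2,2]$. The problem thus reduces to a sufficiently strong pointwise estimate for the off-diagonal entries in terms of the geodesic distance $d_{ij} = \dist(p_i,p_j)$.

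For the pointwise estimate I would use the $SO(3)$-invariance of $-\Delta_{\s^2}$ to rotate so that $p_i = N$ (the north pole) and $p_j$ lies in the $(x_1,x_3)$-plane at angular distance $d = d_{ij}$ from $N$. Then $q_i$ is, up to an irrelevant unit-modulus phase, the normalised highest weight $Q_k \approx k^{1/4}(\sin\phi)^k e^{ik\theta}$, and $q_j = Q_k\circ R$ for $R$ the rotation sending $p_j$ to $N$. Setting $\phi = \pi/2 + s$ and expanding, the integrand in $\langle q_i, q_j\rangle$ takes the form
$$k^{1/2}\, e^{-ks^2/2}\, e^{-k(s - d\cos\theta)^2/2}\, e^{ik(\theta - \theta')},$$
where $(\phi',\theta')$ are the spherical coordinates after rotation. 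Completing the square in $s$ produces the factor $e^{-kd^2\cos^2\theta/4}$, which is already exponentially small on the bulk of $\s^2$, where $|\cos\theta|\gtrsim 1$. The only exceptional regions are small neighbourhoods of the two antipodal points $\theta = \pm\pi/2$ at which the concentration equators of $p_i$ and $p_j$ cross transversally at angle $\approx d$; a direct expansion there gives $\theta - \theta' \approx \mp s d$, so the phase gradient has size $\approx kd$ in the $s$-direction, and a Gaussian-type integration in $s$ combined with stationary-phase in $\theta$ near these crossings yields an additional decay of comparable Gaussian size. The expected outcome is a bound of the form
$$|\langle q_i, q_j\rangle| \les \frac{e^{-c\, k d_{ij}^2}}{1 + d_{ij}\sqrt{k}}$$
for some absolute constant $c>0$.

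The proof is then finished by a packing argument on $\s^2$: the hypothesis $d_{ij} \ge d k^{-1/2}$ implies that the number of poles $p_i$ with $R \le d_{ij} \le 2R$ is at most $C R^2 k/d^2$. Summing dyadically over $R = 2^\ell d k^{-1/2}$ gives
$$\sum_{i \neq j} |\langle q_i,q_j\rangle| \les \sum_{\ell \ge 0} \frac{2^\ell}{d}\, e^{-c\, 4^\ell d^2},$$
which is $\le 1/2$ once $d$ is chosen large enough, independently of $k$. The main technical obstacle is establishing the pointwise estimate near the two intersection points of the concentration equators: the amplitude alone fails to produce Gaussian decay there (the two thin tubes meet in an area of size $\approx k^{-1}/d$ where the integrand has modulus $\approx k^{1/2}$), and all of the required smallness has to be extracted from the oscillatory phase $e^{ik(\theta-\theta')}$ via stationary-phase analysis or repeated integration by parts in $s$, uniformly in the regime $d_{ij} \ge d k^{-1/2}$.
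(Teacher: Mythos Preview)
The paper does not actually prove this lemma; it simply cites \cite[Section 3]{Ha}. Your architecture---Gershgorin reduction to the row-sum bound, a pointwise decay estimate for $|\langle q_i,q_j\rangle|$ in terms of $d_{ij}=\dist(p_i,p_j)$, and a dyadic packing summation---is sound and is the standard way to prove such almost-orthogonality statements.

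The gap is in the pointwise step. You correctly observe that amplitude alone gives only $|\langle q_i,q_j\rangle|\lesssim (d_{ij}\sqrt{k})^{-1}$ near the two crossings of the concentration equators, and you correctly note that this is not summable under packing (the dyadic sum is $\sum_\ell 2^\ell/d$, which grows like $\sqrt{k}/d^{2}$). All of the required Gaussian decay must therefore come from the phase, but your stationary-phase argument is left at the level of ``the expected outcome is\ldots''. Making it rigorous requires controlling the phase Hessian uniformly over the full range $dk^{-1/2}\le d_{ij}\le\pi$, and this uniformity is exactly the delicate point.

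There is a much cleaner route that bypasses the oscillatory integral entirely. Writing $Q_k\propto(x_1+ix_2)^k$ on $\s^2\subset\R^3$, the inner product $\langle q_i,q_j\rangle$ is (up to a unimodular factor from the phase choices) the highest-weight matrix coefficient of the $(2k+1)$-dimensional irreducible representation of $SO(3)$, and the Wigner small-$d$ formula gives the \emph{exact} identity
\[
|\langle q_i,q_j\rangle|=\bigl(\cos(d_{ij}/2)\bigr)^{2k}\le e^{-kd_{ij}^2/4},\qquad d_{ij}\in[0,\pi].
\]
This single line replaces your entire Step 2. Feeding it into your packing sum yields $\sum_{i\neq j}|\langle q_i,q_j\rangle|\lesssim\sum_{\ell\ge0}4^{\ell}e^{-4^{\ell}d^{2}/4}$, which is $\le\tfrac12$ once $d$ is chosen large enough, uniformly in $k$. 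This is essentially the argument in \cite{Ha}.
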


The proof can be found in Han \cite[Section 3]{Ha}. 

Let $m=\lfloor k^{1/2-\alpha}\rfloor$. We choose the poles $\{p_j\}_{j=1}^m$ around $\theta=\pi/2$ on the equator $\phi=\pi/2$ with distance between each pair satisfying $\dist(q_i,q_j)=d\cdot k^{-1/2}$. Then all $p_j$ fall into a $\theta_0$-neighborhood of $\theta=\pi/2$ on the equator, where 
$$\theta_0=m\cdot d\cdot k^{-\frac12}\approx k^{-\alpha}.$$ 
Without loss of generality, assume that $p_1=(0,1,0)$, that is, $\phi=\pi/2$ and $\theta=\pi/2$ in spherical coordinates. Then the corresponding Gaussian beam $q_1$ concentrates around the great circle defined by the equation $\theta=0,\pi$. Any other corresponding Gaussian beam $q_j$ concentrates on the great circle which has an angle $\le\theta_0$ with the one of $q_1$. And $q_j's$ intersect at the north pole $p_n$. Write
$$w=\frac{1}{\sqrt m}\sum_{j=1}^mq_j.$$
Then thanks to Lemma \ref{lemma:almost}, we have
\begin{equation}\label{eq:wL2}
\|w\|_{L^2}^2=\frac1m\left\|\sum_{j=1}^mq_j\right\|_{L^2}^2=\frac1m\left\langle\sum_{i=1}^mq_i,\sum_{j=1}^m\overline q_j\right\rangle\le2.
\end{equation}
We now set the phase of all $q_j's$ at the north pole to be $e^{ik0}=1$. Hence,
$$w(p_n)=\frac{1}{\sqrt m}\sum_{j=1}^mq_j(p_n)\gtrsim k^{\frac{1-\alpha}{2}}.$$
We need to show that the above lower bound holds in the neighbourhood of $p_n$:
$$S=\{x=(x_1,x_2,x_3)\in\s^2:|x_1|<\ve k^{-(1-2\alpha)}\ \text{and}\ |x_2|<\ve k^{-(1-\alpha)}\}$$
for sufficiently small $\ve$. Notice that $S$ falls into the concentration tube of every $q_j$ since $\alpha\in[0,1/2)$ and $|x_2|<\ve k^{-(1-\alpha)}<k^{-1/2}$. Therefore, fix $x\in S$, then
\begin{equation}\label{eq:qjx}
|q_j(x)|\gtrsim k^\frac14\quad\text{for all }j.
\end{equation}
To determine the phases of $q_j(x)$ for $j=1,...,m$, we let $p_j$ be the new north pole and denote $d_j(x)$ as the longitudinal difference of $x$ and $p_n$ in this new coordinate system. Hence, the phase difference of $q_j(x)$ and $q_j(p_n)$ is $e^{ikd_j(x)}$, therefore the phase of $q_j(x)$ is $e^{ikd_j(x)}$ since the phase of $q_j(p_n)$ is set to be $1$.

We observe that for any $x\in S$,
$$|d_1(x)-d_j(x)|\lesssim\ve k^{-(1-\alpha)}\cdot\theta_0\le\ve k^{-1}.$$
Thus, by choosing $\ve$ small, the real part
$$\Re\left(e^{ik(d_j(x)-d_1(x))}\right)\ge\frac12\quad\text{for all }j=1,...,m.$$
Hence,
$$w(x)=\frac{1}{\sqrt m}\sum_{j=1}^m|q_j(x))|e^{ikd_j(x)}=\frac{1}{\sqrt m}e^{ikd_1(x)}\sum_{j=1}^m|q_j(x)|e^{ik(d_j(x)-d_1(x))},$$
and then in view of \eqref{eq:qjx}
$$\Re\left(e^{-ikd_1(x)}w(x)\right)=\frac{1}{\sqrt m}\sum_{j=1}^m|q_j(x)|\Re\left(e^{ik(d_j(x)-d_1(x))}\right)\gtrsim k^{\frac{1-\alpha}{2}}.$$
 Let $v=w/\|w\|_{L^2}$. Then by \eqref{eq:wL2}, 
$$|v(x)|\gtrsim k^{\frac{1-\alpha}{2}}\quad\text{for all }x\in S,$$
which gives the required spherical harmonic in \eqref{eq:midrangev}.

%%%%%%%%%%%%%%%%%%%%%%%%%%%%%%%%%%%%%%%%%%%%%%%%%%%%%%%%%%%%
\section{Appendix: Semiclassical analysis}\label{sec:SA}
In this appendix, we provide the background on semiclassical analysis that is used in this paper. We refer to Zworski \cite{Zw} for a complete treatment in this subject.

Semiclassical analysis provides a valuable framework in which to consider high frequency problems. The key idea is to scale a fixed frequency $\lambda\to{}1$. This scaling gives rise to a semiclassical Fourier transform (with $\lambda=1/h$)
$$\mathcal{F}_{h}[u](\xi)=\frac{1}{(2\pi{}h)^{n/2}}\int{}e^{-\frac{i}{h}\langle{}x,\xi\rangle}u(x)\,dx,$$
and its inverse
$$\mathcal{F}_{h}^{-1}[f](x)=\frac{1}{(2\pi{}h)^{n/2}}\int{}e^{\frac{i}{h}\langle{}x,\xi\rangle}f(\xi)\,d\xi.$$
Note that as with the standard Fourier transform there is some variance in the literature as to the pre-factor. We choose $(2\pi{}h)^{-n/2}$ as this is exactly the right factor to preserve the $L^{2}$ norms. That is,
$$\norm{\mathcal{F}_{h}u}_{L^{2}}=\norm{u}_{L^{2}}.$$
Under this scaling the standard relationships between differentiation and multiplication are retained (although they now also include a scale factor). That is,
$$\mathcal{F}_{h}[hD_{x_{i}}u]=\xi_{i}\mathcal{F}_{h}[u]\quad\text{and}\quad\mathcal{F}_{h}^{-1}[x_{i}f]=-hD_{\xi_{i}}\mathcal{F}^{-1}_{h}[f].$$
Therefore constant coefficient differential operators can be written as
$$a(hD)u(x)=\frac{1}{(2\pi{}h)^{n}}\int{}e^{\frac{i}{h}\langle{}x-y,\xi\rangle}a(\xi)u(y)\,d\xi{}dy.$$
Naturally we want to extend to semiclassical pseudodifferential operators $a(x,hD)$:
$$a(x,hD)u(x)=\frac{1}{(2\pi{}h)^{n}}\int{}e^{\frac{i}{h}\langle{}x-y,\xi\rangle}a(x,\xi)u(y)\,d\xi{}dy.$$
Therefore we must define some symbol classes for the symbols $a(x,\xi)$ to live in.

Like the standard pseudodifferential calculus our symbols will be classical observables on phase space. Suppose that $(\M,g)$ is a Riemannian manifold. An element in the cotangent bundle $T^*\M$ is denoted as $(x,\xi)$ with $x\in\M$ and $\xi\in T^*_x\M$. We write $|\xi|_x$ as the induced norm of $\xi\in T_x^*\M$ by the Riemannian structure $g$. When we work locally, as we often do, we may associated $T^*\M$ with patches of $\R^{2n}$. In this paper we concern ourselves with functions that are semiclassically localised.
\begin{defn}[Semiclassical localisation]
Let $K$ be a compact subset of $T^*\M$. We say that a tempered family $\{u(h)\}$ is localized to $K$ in the phase space if there exists a function $\chi\in C^\infty_0(K)$ for which
$$u(h)=\chi(x,hD)u(h)+O_{L^2}(h^\infty).$$
\end{defn}

This means that we may assume that any symbols we use have compact support. In the standard pseudodifferential calculus, symbols with compact support are smoothing and therefore do not give rise to interesting mathematics. However in the semiclassical calculus the $h$ scaling means that even smooth and compactly supported symbols can indeed give rise to very interesting mathematics. 

We need to define a sensible class of symbols and a procedure to quantise them. That is, to associate them with a semiclassical pseudodifferential operator.

\subsection{Symbol classes}\label{sec:symbol}
 In the analogy to the standard calculus one may define symbols $a(x,\xi)$ that on compact subsets $K$ of $T^*\M$ 
$$\sup_{x\in K,\xi\in T_x^*\M}|\partial^\alpha_x\partial^\beta_\xi a(x,\xi)|\le C_{\alpha,\beta,K}\,\langle\xi\rangle^{m-|\beta|}$$
for some $C_{\alpha,\beta,K}$ independent of $h$. However these classes don't capture the behaviour of the symbol with respect to the semiclassical parameter. To motivate our choice of symbol class, consider the standard symbols
$$a(x,\xi;\lambda)=\left(1+|\xi|^{2}\right)^\frac m2\rho\left(\frac{|\xi|}{\lambda}\right).$$
where $\rho\in C^\infty_0(\R)$ has support near $1$. Hence, these symbols are localised near $|\xi|=\lambda$. We have that
$$a(x,D)u(x)=\frac{1}{(2\pi)^{n}}\int{}e^{i\langle{}x-y,\xi\rangle}\left(1+|\xi|^{2}\right)^\frac m2\rho\left(\frac{|\xi|}{\lambda}\right)u(y)\,d\xi{}dy,$$
re-scaling $\xi\to\lambda\xi$ and setting $\lambda^{-1}=h$ we have
$$a(x,D)u(x)=\frac{1}{(2\pi{}h)^{n}}\int{}e^{\frac{i}{h}\langle{}x-y,\xi\rangle}\left(1+\frac{|\xi|^{2}}{h^{2}}\right)^\frac m2\rho(|\xi|)u(y)\,d\xi dy=a_{h}(x,hD)u(x),$$
where
$$a_{h}(x,\xi;h)=\left(1+\frac{|\xi|^{2}}{h^{2}}\right)^\frac m2\rho(|\xi|).$$
Note that the symbol $a_{h}(x,\xi)$ is localised near $|\xi|=1$ so
$$|a_{h}(x,\xi;h)|\les h^{-m}.$$
Therefore the level of smoothness (captured by the order $m$) of a symbol in the standard calculus translates to decay in powers of $h$ when we consider it semiclassically. Therefore we define a set of symbol spaces of compactly supported symbols to reflect this relationship.

\begin{defn}
Let $m\in\R$. We define the symbol classes
$$S^{m}(\M)=\{a\in{}C_{0}^{\infty}(T^*\M)\mid|\partial^{\alpha}a|\leq{}C_{\alpha}h^{-m}\}.$$
\end{defn}

\begin{enumerate}
\item If $m=0$, we denote $S^m(\M)$ by $S(\M)$.
\item We denote $S^{-\infty}(\M)=\cap_{m\in\R}S^m(\M)$ and $S^\infty(\M)=\cup_{m\in\R}S^m(\M)$. 
\end{enumerate}
If $a(x,\xi;h)$ is independent of $h$ we write it as $a(x,\xi)$. We refer the reader the Zworski \cite{Zw} for definitions of symbols that are not compactly supported or that lack of smoothness as $h\to{}0$. 

\subsection{Semiclassical pseudodifferential operators}\label{sec:SDOs}
Having an appropriate class of symbols we can now associate every symbol with a semiclassical pseudodifferential operator (i.e. \SDO). As in the standard calculus there is some choice to the quantisation procedure.  

\begin{defn}[Standard and Weyl quantisations]
Given $a\in S^m(\R^n)$, we define
\begin{enumerate}[(i).]
\item the left (or standard) quantisation as
$$a(x,hD)u(x)=\frac{1}{(2\pi h)^n}\int_{\R^2n}e^{\frac{i}{h}\langle{}x-y,\xi\rangle}a\big(x,\xi;h\big)u(y)\,d\xi dy\quad\text{for }u\in\mathcal S(\R^n);$$
\item the Weyl quantisation as
$$a^w(x,hD)u(x)=\frac{1}{(2\pi h)^n}\int_{\R^2n}e^{\frac{i}{h}\langle{}x-y,\xi\rangle}a\left(\frac{x+y}{2},\xi;h\right)u(y)\,d\xi dy\quad\text{for }u\in\mathcal S(\R^n).$$
If the symbol is instead in $S^{m}(\M)$ for $(\M,g)$ a smooth Riemann manifold we may keep this definition replacing $\R^{2n}$ with $T^*\M$. 
\end{enumerate}
\end{defn}

While there are other choices of quantisation these two choices are often the most useful. The Weyl quantisaton has the nice property that $a^w(x,hD)$ is self-adjoint if $a$ is a real-valued symbol. The standard quantisation has the nice property that
$$a(x,hD)u=\mathcal{F}_{h}^{-1}\left(a(x,\xi;h)\mathcal{F}_{h}[u]\right).$$

\begin{rmk}
If $a(x,\xi;h)\in{}S^{m}(\M)$ then one can show (by almost orthogonality) that 
$$\|a(x,hD)||_{L^{2}\to{}L^{2}}\les h^{-m}.$$
In particular, if $a(x,\xi;h)\in{}S(\M)$ then the $L^{2}\to{}L^{2}$ mapping norm of $a(x,hD)$ is bounded independent of $h$.
\end{rmk}

We now define the set $\Psi^m(\M)$ of semiclassical pseudodifferential operators with symbols in $S^m(\M)$, and establish the correspondence of $A\in\Psi^m(\M)$ and its semiclassical principal symbol $a$. The correspondence is one-to-one modulo lower order terms. Denote
$$a=\sigma(A):\Psi^m(\M)\to S^m(\M)/S^{m-1}(\M),$$
and its right inverse, a non-canonical quantisation map for $a\in S^m(\M)$:
$$A=\Op_h(a):S^m(\M)\to\Psi^m(\M).$$
$\sigma(A)$ is called the principal symbol of $A$. It is modulo $S^{m-1}(\M)$ unique under change of quantisations and change of local coordinates. In the same fashion in \S\ref{sec:symbol},
\begin{enumerate}
\item if $m=0$, we denote $\Psi^m(\M)$ by $\Psi(\M)$;
\item we denote $\Psi^{-\infty}(\M)=\cap_{m\in\R}\Psi^m(\M)$ and $\Psi^\infty(\M)=\cup_{m\in\R}\Psi^m(\M)$. In this context the elements in $\Psi^{-\infty}(\M)$ are referred as smoothing operators;
\end{enumerate}

We sometimes abuse notation somewhat and neglect to show the cutoff functions that provide the compact support. For example we may work with the Laplacian and refer to its symbol as $|\xi|_{x}^{2}$ when in fact in our localised setting we are really working with the operator $p(x,hD)$ with symbol
$$p(x,\xi)=\rho(|\xi|_{x})|\xi|_{x}^{2}$$
where $\rho\in C^\infty_0(\R)$. 

The usual operations involving semiclassical pseudodifferential operators are as follows. Let $A\in\Psi^m(\M)$ and $B\in\Psi^{m'}(\M)$.
\begin{enumerate}
\item Let $A^\star$ be the adjoint operator of $A$ in $L^2(\M)$. Then
\begin{equation}\label{eq:SDOadjoint}
\sigma(A^\star)=\ol{\sigma(A)}+r(x,\xi;h),
\end{equation}
where the error term $r(x,\xi;h)\in{}S^{m-1}(\M)$.
\item
\begin{equation}\label{eq:SDOproduct}
\sigma(AB)=\sigma(A)\sigma(B)+r(x,\xi;h),
\end{equation}
where the error term $r(x,\xi;h)\in{}S^{m+m'-1}(\M)$.
\item
\begin{equation}\label{eq:SDOcommutator}
\sigma([A,B])=-ih\{\sigma(A),\sigma(B)\}+hr(x,\xi;h),
\end{equation}
where the error term $r(x,\xi;h)\in{}S^{m+m'-1}(\M)$ and $\{\cdot,\cdot\}$ denotes the Poisson bracket defined by
$$\{a,b\}=\frac{\partial a}{\partial x}\frac{\partial b}{\partial\xi}-\frac{\partial a}{\partial\xi}\frac{\partial b}{\partial x}.$$
\end{enumerate}

Note that like the standard calculus, error terms from commutation and composition are of lower order. In the semiclassical setting this means that each of these error terms is one factor of $h$ better than the first term. 

We now briefly discuss some standard properties and estimates from semiclassical analysis relevant to this paper.
\subsection{Elliptic operators}\label{sec:Elliptic}
We say that $a\in S(\R^n)$ is elliptic if $|a(x,\xi)|\ge c>0$ for all $(x,\xi)\in T^*\R^n$. If $a$ is elliptic, then there exists $b\in S(\R^n)$ such that $b(x,hD)$ is the inverse of $a(x,hD)$ modulo a smoothing operator. In fact, this assertion holds in the microlocal sense:
\begin{thm}[Inverses of elliptic \SDO s]\label{thm:elliptic}
Let $\chi\in S(\R^n)$. If $a\in S(\R^n)$ satisfies $|a(x,\xi)|\ge c>0$ for all $(x,\xi)\in\supp\chi$. Then there exists $b\in S(\R^n)$ such that
$$b(x,hD)a(x,hD)\chi(x,hD)=\chi(x,hD)+O_{L^2\to L^2}(h^\infty),$$
and
$$a(x,hD)b(x,hD)\chi(x,hD)=\chi(x,hD)+O_{L^2\to L^2}(h^\infty).$$
These equations also hold if we use Weyl quantisation instead of the standard one. Moreover, if $\chi\in C^\infty_0(T^*\R^n)$, then we can replace $O_{L^2\to L^2}(h^\infty)$ by $O_{\mathcal S'\to\mathcal S}(h^\infty)$.
\end{thm}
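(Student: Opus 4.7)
The plan is the standard elliptic parametrix construction adapted to the microlocal setting where ellipticity holds only on $\supp\chi$. First I would choose a sequence of symbols $\chi \prec \chi_1 \prec \chi_2 \prec \cdots$ in $S(\R^n)$ (meaning $\chi_{j+1}=1$ on a neighbourhood of $\supp\chi_j$), all supported in the set $\{|a|\geq c/2\}$, which is possible because $|a|\geq c>0$ on $\supp\chi$ and $a$ is continuous. Define the leading symbol
$$b_0(x,\xi)=\frac{\chi_1(x,\xi)}{a(x,\xi)}\in S(\R^n),$$
which is well-defined since $a$ is bounded below on $\supp\chi_1$ by ellipticity.

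The first step is to verify, using the composition formula \eqref{eq:SDOproduct}, that $b_0(x,hD)a(x,hD)$ has symbol $\chi_1+hc_1$ with $c_1\in S(\R^n)$. Then $b_0(x,hD)a(x,hD)\chi(x,hD)=\chi(x,hD)+hr_1(x,hD)+O_{L^2\to L^2}(h^\infty)$, where $r_1\in S(\R^n)$ is essentially supported in $\supp\chi_1$; here I use that $\chi_1\#\chi-\chi$ is $O(h^\infty)$ in $S(\R^n)$ because all $\xi$- and $x$-derivatives of $\chi_1$ vanish on $\supp\chi$, so every term in the asymptotic expansion of the composition is zero. Next I iterate: suppose we have $b^{(N)}=b_0+hb_1+\cdots+h^Nb_N\in S(\R^n)$ (with each $b_j$ supported in the elliptic region) satisfying $b^{(N)}(x,hD)a(x,hD)\chi(x,hD)=\chi(x,hD)+h^{N+1}r_{N+1}(x,hD)+O_{L^2\to L^2}(h^\infty)$. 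Setting $b_{N+1}=-b_0\,r_{N+1}\in S(\R^n)$ and invoking \eqref{eq:SDOproduct} once more removes the $h^{N+1}$ error and produces a new remainder of order $h^{N+2}$. Borel's theorem (see Zworski \cite{Zw}) then yields $b\in S(\R^n)$ with $b\sim\sum_{j\geq 0}h^jb_j$, i.e.\ $b-b^{(N)}\in h^{N+1}S(\R^n)$ for every $N$, giving the left parametrix identity since symbols in $h^{N+1}S(\R^n)$ have $L^2\to L^2$ norm $O(h^{N+1})$.

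For the right parametrix, one repeats the construction producing $\tilde b\in S(\R^n)$ with $a(x,hD)\tilde b(x,hD)\chi(x,hD)=\chi(x,hD)+O_{L^2\to L^2}(h^\infty)$; composing the two identities and applying associativity shows $b(x,hD)\chi(x,hD)-\tilde b(x,hD)\chi(x,hD)=O_{L^2\to L^2}(h^\infty)$, so $b$ itself serves as both left and right parametrix modulo $O(h^\infty)$ (alternatively, one may apply the left construction to the elliptic adjoint $a^\star$, using \eqref{eq:SDOadjoint}, and take adjoints). The Weyl quantisation version follows verbatim because the two quantisations of any symbol in $S(\R^n)$ differ by an element of $h\Psi(\R^n)$, which is absorbed into the iterative correction without altering the leading analysis.

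Finally, the upgrade to $O_{\mathcal S'\to\mathcal S}(h^\infty)$ when $\chi\in C^\infty_0(T^*\R^n)$: with $\chi$ compactly supported in both $x$ and $\xi$, the cutoffs $\chi_j$ can be chosen compactly supported in $T^*\R^n$, so every $b_j$ and remainder $r_j$ lies in $C^\infty_0(T^*\R^n)\subset S^{-\infty}(\R^n)$; the resulting remainders have Schwartz-class kernels, yielding the stronger mapping property. The main obstacle is the careful bookkeeping of supports through the iteration — ensuring that each correction $b_{j+1}=-b_0\,r_{j+1}$ remains supported in the elliptic region where $b_0$ is defined, and that the $O(h^\infty)$ errors arising from the cutoff mismatches $\chi_j\#\chi-\chi$ can be systematically separated from the genuine $h$-asymptotic part. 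Once the nested cutoff scheme $\chi\prec\chi_1\prec\chi_2\prec\cdots$ is fixed, each step reduces to a mechanical application of \eqref{eq:SDOproduct}.
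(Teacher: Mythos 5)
Your proposal is correct and uses the same iterative parametrix construction as the paper, but it is actually more careful than the paper's own sketch. The paper's proof silently assumes $a$ is \emph{globally} bounded away from zero (``If $a(x,\xi;h)$ is bounded away from zero, then we may define $b_0(x,hD)$ by its symbol $1/a(x,\xi;h)$''), which is stronger than the stated hypothesis that $|a|\geq c>0$ only on $\supp\chi$; it then builds the Neumann-series corrections $b_j$ and stops. Your nested-cutoff scheme $\chi\prec\chi_1\prec\cdots$ supported in $\{|a|\geq c/2\}$ is exactly the ingredient needed to make $b_0=\chi_1/a$ well-defined under the actual microlocal hypothesis and to justify that $\chi_1\#\chi-\chi=O(h^\infty)$ (every term of order $\geq1$ in the expansion hits a derivative of $\chi_1$, which vanishes on $\supp\chi$). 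One small simplification worth noting: a full infinite nested chain is not needed --- a single $\chi_1$ with $\chi_1\equiv1$ near $\supp\chi$ suffices, because the non-$O(h^\infty)$ part of every remainder $r_{j+1}$ arising from $b^{(j)}\#a\#\chi$ is automatically supported in $\supp\chi$, where $\chi_1\equiv1$, so $b_{j+1}=-b_0\,r_{j+1}$ stays supported in $\supp\chi_1$ at every step. Also note the paper's sketch has a sign slip ($b_1=r_1/a$ rather than $b_1=-r_1/a$), which your iteration gets right. Your treatment of the right parametrix via adjoints, the Weyl comparison via an $h\Psi$ difference, and the upgrade to $O_{\mathcal S'\to\mathcal S}(h^\infty)$ when $\chi\in C^\infty_0(T^*\R^n)$ are all standard and correct.
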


The proof of Theorem \ref{thm:elliptic} is similar to the analogous statement for the standard calculus. However, in this setting we express everything in powers of $h$. If $a(x,\xi;h)$ is bounded away from zero, then we may define $b_{0}(x,hD)$ by its symbol $b(x,\xi;h)=1/a(x,\xi;h)$. Since $a(x,\xi;h)$ is bounded away from zero $b_{0}(x,\xi;h)\in{}S(\R^{n})=S^{0}(\R^{n})$ Then the composition formula \eqref{eq:SDOproduct} gives that
$$\sigma(b_{0}(x,\xi;h)a(x,\xi;h))=1+r_{1}(x,\xi;h),$$
where $r_1(x,\xi;h)\in{}S^{m-1}(\R^{n})$, that is, it is one order of $h$ better than $a(x,\xi;h)$. Then we may set
$$b_{1}(x,\xi;h)=\frac{r_{1}(x,\xi;h)}{a(x,\xi;h)},$$ 
and the composition formula \eqref{eq:SDOproduct} gives
$$\left(b_{0}(x,hD)+hb_{1}(x,hD)\right)a(x,hD)=1+r_{2}(x,hD),$$
where $r_{2}(x,\xi;h)\in{}S^{m-2}(\R^{n})$. We can continue this process to produce
$$b_{N}(x,\xi;h)=\sum_{i=0}^{N}h^{i}b_{i}(x,\xi;h)$$
such that
$$b_{N}(x,hD)a(x,hD)=1+r_{N}(x,hD)$$
with $r_{N}(x,\xi;h)\in{}S^{m-N}$. Therefore modulo an $O(h^{\infty})$ term we may produce an inverse for $a(x,\xi;h)$.

The localisation property alone is enough to prove a range of $L^{p}$ estimates  (see e.g. Koch-Tataru-Zworski \cite[Lemma 2.2]{KTZ})
\begin{thm}[Semiclassical $L^p$ estimates]\label{thm:scLp} 
If $a\in S(\R)$, then
$$a(x,hD)=O\left(h^{n\left(\frac1q-\frac1p\right)}\right):L^p(\R^n)\to L^q(\R^n),$$
in which $1\le p\le q\le\infty$.
\end{thm}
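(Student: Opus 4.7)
My plan is to reduce the estimate to bounds on the Schwartz kernel
$$K(x,y) = \frac{1}{(2\pi h)^n}\int_{\R^n} e^{\frac{i}{h}\langle x-y, \xi\rangle}\,a(x,\xi;h)\,d\xi$$
of $a(x,hD)$. Since $a\in S(\R^n)$ is compactly supported in $(x,\xi)$ with all derivatives uniformly bounded in $h$, the trivial estimate gives $|K(x,y)|\lesssim h^{-n}$. For $|x-y|\gtrsim h$, repeated integration by parts via
$$\left(\frac{h\langle x-y, D_\xi\rangle}{i|x-y|^2}\right)^{\!N} e^{\frac{i}{h}\langle x-y,\xi\rangle} = e^{\frac{i}{h}\langle x-y,\xi\rangle}$$
moves derivatives onto $a(x,\xi;h)$, which costs nothing because those derivatives are $O(1)$ uniformly in $h$. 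Combining the two regimes yields, for every $N$,
$$|K(x,y)| \lesssim_N h^{-n}\bigl(1+|x-y|/h\bigr)^{-N}.$$

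From this pointwise decay I extract two endpoint estimates. For the $L^p\to L^p$ bound I invoke Schur's test: after the change of variables $z=(y-x)/h$, both $\sup_x \int |K(x,y)|\,dy$ and $\sup_y \int |K(x,y)|\,dx$ equal $O(1)$, the latter using that $a$ has compact $x$-support so that $K(\cdot,y)$ does too. Hence $a(x,hD):L^p\to L^p$ with norm $O(1)$ uniformly in $h$ for all $1\le p\le\infty$. For the $L^p\to L^\infty$ bound I apply H\"older's inequality pointwise in $x$: with the same change of variables $\|K(x,\cdot)\|_{L^{p'}}\lesssim h^{-n}\cdot h^{n/p'}=h^{-n/p}$, so
$$\|a(x,hD)u\|_{L^\infty}\lesssim h^{-n/p}\|u\|_{L^p}.$$

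Finally, Riesz-Thorin interpolation between these two estimates (fixing the input space $L^p$ and interpolating the output between $L^p$ and $L^\infty$) closes the argument: choosing $\theta\in[0,1]$ so that $1/q=(1-\theta)/p$, i.e.\ $\theta=1-p/q$, the interpolated norm is
$$1^{1-\theta}\cdot\bigl(h^{-n/p}\bigr)^{\theta}=h^{-n\theta/p}=h^{n(1/q-1/p)},$$
which is precisely the claimed bound. The hypothesis $p\le q$ is exactly the constraint $\theta\in[0,1]$, so the full range is recovered. The main technical step is the kernel decay estimate by semiclassical integration by parts, which is standard once one remembers that the $S(\R^n)$ class enforces $h$-independent derivative bounds; everything afterwards is operator-theoretic bookkeeping via Schur, H\"older, and Riesz-Thorin, and I do not anticipate any real obstacle beyond the kernel bound itself.
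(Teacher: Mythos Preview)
Your argument is correct. The paper does not supply its own proof of this result but simply cites \cite[Lemma 2.2]{KTZ}; your approach---non-stationary phase to obtain the kernel bound $|K(x,y)|\lesssim_N h^{-n}(1+|x-y|/h)^{-N}$, followed by Schur's test for $L^p\to L^p$, H\"older for $L^p\to L^\infty$, and Riesz--Thorin interpolation---is precisely the standard proof and matches what one finds in that reference. One incidental remark: the compact $x$-support of $a$ is not actually needed for the bound $\sup_y\int|K(x,y)|\,dx=O(1)$, since the kernel decay alone gives it after the change of variables $z=(x-y)/h$; your observation is harmless but superfluous.
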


An immediate consequence of Theorem \ref{thm:scLp} is that if $u=u(h)$ is a family of localised functions then (as we may locally treat $T^*\M$ as $\R^{2n}$)
$$\|u\|_q\le Ch^{n\left(\frac1q-\frac1p\right)}\|u\|_{L^p}+O(h^\infty),$$
for $1\le p\le q\le\infty$. 

\subsection{Quasimodes}\label{sec:qm}
Let $A(h)\in\Psi^m(\M)$. We define quasimodes as approximate solutions to $A(h)u(h)=0$. An $O_{L^{2}}(h^{\alpha})$ quasimode is a function $u\in{}L^{2}$ such that
$$\|A(h)u(h)\|_{L^2}\lesssim{}h^{\alpha}\|u(h)\|_{L^2}.$$
The finest quasimode resolvable by semiclasscical analysis is $O_{L^{2}}(h^{\infty})$ that is
$$\norm{A(h)u(h)}_{L^{2}}\lesssim{}h^{N}\norm{u}_{L^{2}}$$
for any $N$. 

We are particularly interested in the case when $\alpha=1$, that is, $O_{L^{2}}(h)$ quasimodes. These quasimodes behave well under localisation. Suppose $\chi(x,\xi)$ is a smooth compactly supported function then $\chi(x,\xi)\in{}S(\M)$ (we should think of this function as having small support in phase space). By the composition formula \eqref{eq:SDOproduct},
$$A(h)\chi(x,hD)u=a(x,hD)\chi(x,hD)u=\chi(x,hD)a(x,hD)u+r(x,hD)u,$$
where if $a(x,\xi;h)\in{}S^{m}(\M)$, then $r(x,\xi;h)\in{}S^{m-1}(\M)$. If $a(x,\xi;h)\in{}S(\M)$, which is often the case, then $r(x,\xi;h)\in{}S^{-1}(\M)$ and
$$\norm{r(x,hD)}_{L^{2}\to{}L^{2}}\lesssim{}h.$$
Therefore if $u$ is an $O_{L^{2}}(h)$ quasimode of $A(h)$, the function $v=\chi(x,hD)u$ is also an $O_{L^{2}}(h)$ quasimode of $A(h)$. This means that we may always decompose phase space into small size patches and work on each patch separately. Finer quasimode order is not necessary preserved under localisation making the treatment of such function technically difficult. Finally when $u$ is an $O_{L^{2}}(h)$ quasimode and $a(x,\xi;h)\in{}S(\M)$ we may ignore the dependence of the symbol on $h$. We write
$$a(x,\xi;h)=a_{0}(x,\xi)+h\tilde{a}(x,\xi;h),$$
where $a_{0}(x,\xi)=a(x,\xi,0)$ and $\tilde{a}\in{}S$. Therefore if $u$ is an $O_{L^{2}}(h)$ quasimode of $a(x,hD)$, then it is also an $O_{L^{2}}(h)$ quasimode of $a_{0}(x,hD)$. For this reason we only study quasimodes of operators associated to symbols $a(x,\xi)$ independent of $h$. 

\begin{ex}
Let $P(h)=-h^2\Delta-1$. Consider 
$$\Delta{}u_j=\lambda_{j}^{2}u_j,\quad\text{where }\norm{u_{j}}_{L^{2}}=1.$$
This family of eigenfunctions $\{u(h_{j})\}$ with $h_{j}=\lambda_{j}^{-1}$ is an quasimode of any order (i.e. $O_{L^{2}}(h^{\infty})$ quasimodes).

Now define the spectral clusters as linear combinations of eigenfunctions in a spectral window with fixed length:
$$u_\mu=\sum_{\lambda_j\in[\mu,\mu+1]}c_ju_j.$$
Then $u(h)=u_{1/h}$ is an $O_{L^2}$ quasimode to $P(h)$. The same conclusion also holds if one considers the smoothed spectral cluster (i.e. Sogge operator)
$$\chi_\mu u=\chi\left(\sqrt{-\Delta}-\mu\right)u\quad\text{with }\chi\in C^\infty_0(\R).$$
That is, $u(h)=\chi_{1/h}u$ is a family of $O_{L^2}$ quasimodes of $P(h)$. See Zworski \cite[\S7.4.1]{Zw} for more discussions on quasimodes. 
\end{ex}

It is often instructive to look at the singular structure of families of functions. This is usually done through the semiclassical wavefront set. The semiclassical wavefront set $\WF_h(u)$ of a tempered family $u(h)$ is the complement of the set of points $(x,\xi)\in T^*\M$ such that there exists $a\in C^\infty_0(T^*\M)$ with support sufficiently close to $(x,\xi)$ with
$$a(x,hD)u(h)=O_{L^2}(h^\infty).$$
Let $A\in\Psi^m(\M)$ with principal symbol $a\in S^m(\M)$ and $u(h)$ be a $O_{L^2}(h^\infty)$ family of quasimodes, i.e.
$$A(h)u(h)=O_{L^2}(h^\infty).$$
Then
$$\WF_h(u)\subset\{(x,\xi)\in T^*\M:a(x,\xi)=0\}.$$

\begin{ex}
The eigenfunctions (as a tempered family) $\{u_j\}_{j=0}^\infty=\{u(h_j)\}_{j=0}^\infty$ are the solutions to
$$P(h)u(h)=0,$$ 
where $P(h)=-h^2\Delta-1$ has symbol $p(x,\xi)=|\xi|_x^2-1$. Then the semiclassical wavefront set $\WF_h(u)$ is contained in $p^{-1}(0)=\{(x,\xi)\in T^*\M:|\xi|_x=1\}=S^*\M$, the cosphere bundle.
\end{ex}

As stated above we intend to work with localised symbols. To translate our results to eigenfunctions we need to know that they are indeed semiclassically localised. 

\begin{ex}
The eigenfunctions $\{u_j\}_{j=0}^\infty=\{u(h_j)\}_{j=0}^\infty$ admit localisation property: Note that $S^*\M$ is compact, one only needs to choose $\chi\in C^\infty_0(\M)$ such that $\supp\chi\supset S^*\M$ and equals $1$ around $S^*\M$, then
$$(1-\chi(x,h_jD))u(h_j)=O_{L^2}(h_j^\infty),$$
since $1-\chi\in C^\infty_0(\M)$ and $\supp(1-\chi)\cap\WF_h(u)=\emptyset$. 
\end{ex}

\subsection{Evolution equation and semiclassical Fourier integral operators}\label{sec:evo}
Let $P\in\Psi^m(\M)$ with principal symbol $p$. A key intuition in the study of quasimodes of $P$ is to link their behaviour to the classical flow given by
$$\begin{cases}
\dot{x}(t)=\partial_{\xi}p(x,\xi);\\
\dot{\xi}(t)=-\partial_{x}p(x,\xi).
\end{cases}$$
We often think of a quasimode as being comprised of small wave-packets tracking along trajectories of the classical flow. To make use of this intuition we need to understand semiclassical evolution equation. 

Consider the inhomogeneous semiclassical evolution equation.
$$\begin{cases}
(hD_t+A(t))u(t,x)=f(t,x),\quad\text{for }(t,x)\in\R\times\R^n,\\
u(0)=u_0.
\end{cases}$$
Here, $A(t)\in S(\R\times\R^n)$ is a family of \SDO s with $t$ as the parameter. The principal symbols $a(t,x,\xi)=\sigma(A(t))$ are real-valued and independent of $h$. We may use Duhamel's principle to reproduce $u$ in terms of the propagator $U(t)$ and the inhomogeneity. We represent $U(t)$ by an $h^{\infty}$-approximate propagator, that is, a solution to
\begin{equation}\begin{cases}
(hD_t+A(t))U(t)u=O_{L^2}(h^\infty),\quad\text{for }|t|\le T,\\
U(0)u=u.
\end{cases}\label{hinfprop}\end{equation}
We find a parametrix solution to \eqref{hinfprop} where $U(t)$ is a semiclassical Fourier integral operator and $u$ is semiclassically localised according to Definition \ref{localised}. Indeed for $T$ small, 
$$U(t)u(x)=\frac{1}{(2\pi h)^n}\int_{\R^n}\int_{\R^n}e^{\frac{i}{h}(\phi(t,x,\xi)-\langle{}y,\xi\rangle)}b(t,x,\xi;h)u(y)\,d\xi dy+E(t)u(x),$$
where $E(t)=O(h^\infty):\mathcal S'\to\mathcal S$, $\vp$ satisfies
$$\begin{cases}
\partial_t\phi(t,x,\eta)+a(t,x,\partial_x\phi(t,x,\xi))=0,\\
\phi(0,x,\xi)=\langle{}x,\xi\rangle,
\end{cases}$$
and
$$b(t,x,\xi;h)\in C^\infty_0(\R\times T^*\R^n\times\R).$$
\begin{proof}[Sketch of proof]
We seek a parametrix solution of the form
$$U(t)u(x)=\frac{1}{(2\pi{}h)^{n}}\int{}e^{\frac{i}{h}(\phi(t,x,\xi)-\langle{}y,\xi\rangle)}b(t,x,\xi;h)u(y)\,d\xi{}dy,$$
where
$$\phi(0,x,\xi)=\langle{}x,\xi\rangle\quad\text{and}\quad b(0,x,\xi;h)=1.$$
Note that this definitely satisfies the initial conditions. Then
$$hD_{t}[U(t)u(x)]=\frac{1}{(2\pi{}h)^{n}}\int{}e^{\frac{i}{h}(\phi(t,x,\xi)-\langle{}y,\xi\rangle)}\big[\phi_{t}(t,x,\xi)b(t,x,\xi;h)+hb_{t}(t,x,\xi)\big]u(y)\,d\xi{}dy,$$
and using the method of stationary phase
$$a(x,hD)[U(t)u(x)]=\frac{1}{(2\pi{}h)^{n}}\int{}e^{\frac{i}{h}\phi(t,x,\xi)-\langle{}y,\xi\rangle)}\big[a(t,x,\nabla_{x}\phi)b(t,x,\xi;h)+hr(t,x,\xi;h)\big]u(y)\,d\xi{}dy.$$
Therefore, if
$$\phi_{t}(t,x,\xi)+a(t,x,\nabla_{x}\phi)=0,$$
then 
$$\norm{(hD_{t}+A(t))U(t)}_{L^{2}\to L^2}\lesssim{}h.$$
To improve the error write
$$b(t,x,\xi;h)=1+\sum_{j=1}^{N}b_{j}(t,x,\xi;h).$$
Then by solving transport equations for each of the $b_{j}$ we may improve the error to $h^{N}$ for any $N$. Full details of the proof can be found in Zworski \cite[Chapter 10]{Zw}.
\end{proof}

\section*{Acknowledgements}
We want to thank Andrew Hassell for offering suggestions that helped to improve the presentation, Chris Sogge for pointing out the reference \cite{MSS}, and Nicholas Burq for bringing to our attention the relation between our $L^3$ bilinear estimate and the $L^2$ trilinear estimate in \cite{BGT5} . X.H. is partially supported by the Australian Research Council through Discovery Project DP120102019.

\end{document}